\newtheorem{theorem}{Theorem}[section]
\newtheorem{lemma}[theorem]{Lemma}
\newtheorem{Proposition}[theorem]{Proposition}
\def \tilde{\widetilde}
\newcommand{\hs}{\hspace}
\def \triple\mid {\mid \! \mid  \! \mid }
\def\<{\langle}
\def\>{\rangle}
\def\~{\tilde}
\date{}
\title{\LARGE \bf
Hybrid Master Equation for Jump-Diffusion Approximation of Biomolecular Reaction Networks }
\author{Derya Alt{\i}ntan $^{1,}$\thanks{This
	author acknowledges support from the Scientific and Technological Research Council of Turkey
	(T{\"{U}}B{\.{I}}TAK), Program no: 3501 Grant no. 115E252},
Heinz Koeppl$^{2,}$\thanks{corresponding author}}
\begin{document}
	
	\maketitle
	\footnotetext[1]{ Department of Mathematics, Sel\c{c}uk University, altintan@selcuk.edu.tr}
	\footnotetext[2]{Department of Electrical Engineering and Information Technology, Technische Universit\"{a}t Darmstadt,\\ \hs*{0.5cm} heinz.koeppl@bcs.tu-darmstadt.de}
	
	\begin{abstract}
		Cellular reactions have  multi-scale nature in the sense that the abundance of molecular species and the magnitude of reaction rates can vary in a wide range. This diversity leads to hybrid models that combine deterministic and stochastic modeling approaches. To reveal this multi-scale nature, we proposed jump-diffusion approximation in a previous study. The key idea behind the model was to partition reactions into fast and slow groups, and then to combine Markov chain updating scheme for the slow set with diffusion (Langevin)
		approach updating scheme for the fast set. Then, the state vector of the model was defined as the summation of the random time change model  and the solution of the Langevin equation. In this study, we have proved 
		that the joint probability density function of the jump-diffusion approximation over the reaction counting process satisfies the hybrid master equation, which is the summation of the chemical master equation and the Fokker-Planck equation. To solve the hybrid master equation, we propose an algorithm using the moments of reaction counters of fast reactions given the reaction counters of slow reactions. Then, we solve a constrained optimization problem for each conditional probability density at the time point of interest utilizing the maximum entropy approach. 
		Based on the multiplication rule for joint probability density functions, we construct the solution of the hybrid master equation. To show the efficiency of the method, we implement it to a canonical model of gene regulation.

		\noindent
		{\bf Keywords:} jump-diffusion approximation, chemical master equation, Fokker-Planck equation, maximum entropy approach
	\end{abstract}

\section{Introduction}
Reaction networks in systems of biology have discrete and stochastic nature \cite{ee:10,ff:02,fcs:10}.
 Ignoring the randomness of the stochastic fluctuations and the discreteness of the number of molecules of species result in  inappropriate models which cannot correctly describe the dynamics of the whole cell. Stochastic modeling approach explains the dynamics of these systems using discrete-state continuous-time Markov chains and 
 describes the state of the system by integer-valued number of molecules of species. In this approach, the state vector of the system satisfies the random time change model (RTCM), which defines the reaction counting processes using Poisson processes \cite{ak:11}. Also, the probability mass function of these systems satisfies a set of  differential equations referred to as the chemical master equation (CME) in the literature \cite{gill:76}. When the number of molecules of the species in the system of interest is very high, the state vector of the system can be defined by real-valued concentrations instead of integer-valued particle numbers. Dynamics of such systems can be  modeled through diffusion approximation, and the state vector of the system satisfies an It\^{o} stochastic differential equation (SDE) known as the chemical Langevin equation (CLE). Similarly, probability density function of these systems suffices the  Fokker-Planck equation (FPE) \cite{gill:00,gill:02}. In the thermodynamic limit, in which the number of molecules of species and the system volume both approach to infinity while the concentrations of species stay constant, the state of the system is given by the reaction rate equation (RRE) of the traditional deterministic modeling approach.

	Cellular reaction systems involve reactions with very different rates and species with very different abundances. Models only based on the traditional deterministic modeling approach fail to account for this nature. Therefore, different hybrid methods that couple the stochastic and deterministic modeling approaches are needed. In general, hybrid methods separate reactions and/or  species into different groups of reactions and/or species, and they use the diffusion or the deterministic modeling approach to  describe the dynamics of fast reactions and/or species with high copy numbers, while Markov chain representation is utilized for slow reactions and/or species with low copy numbers   \cite{ckl:16,ce:18,cdr:09,dez:16,ehl:17,jah:11}. 
	
	A major challenge of modeling the reaction networks using the CME is the curse of dimensionality. Each state of the system under consideration adds one dimension to the corresponding CME. Therefore, when the number of reachable states is very high, it is very difficult to obtain the numerical solution  of the CME. To avoid this drawback, different simulation algorithms, such as Gillespie's stochastic simulation algorithms (SSAs) and their versions, were proposed to obtain the trajectories of the biochemical system of interest \cite{gb:00,gill:07}. The computational cost of these algorithms increases with the size of the model; therefore, it is not appropriate to use them  for  very complicated systems involving many reactions and reactants. 
	
	Moment approximations that analyze the dynamics of  the reaction network under consideration  using  moments of the probability distribution satisfying the corresponding CME are considered as an alternative. 
	In \cite{eng:06}, the author proposed the method of moments that computes the moments for any reaction network from the corresponding CME. In \cite{lkk:09}, a moment closure approximation that obtains finite dimensional ordinary differential equation (ODE) system for the mean and the central moments by truncating the moment equations at a certain order and using the Taylor series is introduced. Another moment closure method that approximates the moments with higher order, compared to the order of truncation, utilizing nonlinear functions of the lower order moments is introduced in  \cite{sh:11}. 
	
	In \cite{hwkt:13}, the authors introduced the method of conditional moments (MCM) that can be considered as the combination of a hybrid method and a moment approximation method. The MCM separates  species into two different classes involving species with high copy number of molecules and species with low copy number of molecules. Based on this decomposition, the joint probability density  function satisfying the corresponding CME is also represented as a product of the marginal probabilities of species with low copy number of molecules and the conditional probabilities of species with high copy number of molecules conditioned on the remaining species with low copy numbers of molecules. To describe the dynamics of species with low copy number of molecules, the authors used  marginal probabilities, while the conditional means and the centered conditional moments are used to model the dynamics of species with high copy numbers. In comparison to \cite{hwkt:13}, in \cite{amw:15}, the authors obtained  moments of the system of interest directly from the corresponding CME without using any partitioning of the species, and the maximum entropy approach is used to construct the corresponding probability distribution. 
	
	In \cite{gak:015}, we developed a jump-diffusion approximation to model multi-scale behavior of cellular reactions. Based on an error bound, we separated reactions into fast and slow groups. We employed diffusion approximation for the fast reactions, while Markov jump process was kept for the slow ones. As a result, the state of the system was defined as the summation of the RTCM and the solution of the corresponding CLE. 
	In this paper, based on this representation, we present the hybrid master equation (HME), which is the evolution equation for the joint probability density  function  of the jump diffusion approximation over the reaction counting process. We prove that the HME is the summation of the corresponding CME and the corresponding FPE \cite{paw:67}. To solve the HME, we obtain  the evolution equation for the marginal probability of slow reactions
and the evolution equations for the conditional moments of the fast reactions given slow  reactions \cite{hwkt:13}. Using the maximum entropy approach, we construct the corresponding conditional probability at the time point of interest, which in turn gives the approximate solution of the corresponding HME.
	
	The rest of the paper is organized as follows: In Section  \ref{basics}, we describe the basic concepts of the stochastic modeling approach. In Section \ref{jump_diffusion}, we give a brief summary of the jump-diffusion approximation. We introduce the HME  in Section \ref{hybrid_master0}. 
	In Section \ref{solution_hme}, we construct an ODE system that will be used to obtain the approximate solution of the HME. In Section \ref{entropy}, we introduce the maximum entropy approach.  In Section \ref{appl_sec}, we present numerical results and also explain the details of how we use the maximum entropy approach to construct the joint probability density function describing the HME. Section \ref{conclusion0} concludes the paper. 
	\newline
	\textbf{Notation}
	\newline
	Before we give the details of the mathematical derivations, we present the basic notations used through the present paper. We represent all random variables and their realizations by upper-case (i.e. \(A\)) and lower case (i.e. \(a\)) symbols, respectively, and we use bold symbols to represent the support of a random variable (i.e. \( \mathbf{A}\)). Also, \(e_j\), \(\bar{e}_j\) denote \((R-L) \times 1\), 
	\(L \times 1\), unit vectors with 1 in the \(j-\)th component and \(0\) in other coordinates. 
	
	\section{Stochastic Modeling of Chemical Kinetics}
	\label{basics}
	In this study, we consider a well-mixed reaction system of  \(M\) species, \(S_1,S_2,\ldots, S_M\), interacting  through \(R \geq 1\) reaction channels \(R_1,R_2,\ldots,R_R\) inside the reaction compartment with volume \(V\). The \(k-\)th reaction channel of the system is described as follows: 
	
	\begin{equation*} 
	\label{eq:system}
	r_{1k}S_1+r_{2k}S_2+r_{3k}S_{3}+\ldots+r_{Mk}S_{M} \stackrel{ \displaystyle \ell_{k}}{\longrightarrow} p_{1k}S_1+p_{2k}S_2+p_{3k}S_{3}+\ldots+p_{Mk}S_{M}, 
	\end{equation*}
	where \(r_{jk}, p_{jk} \in \mathbb{N}\), \(j=1,2,\ldots,M\),  represent the number of molecules of species \(S_j\) consumed and produced with a single occurrence of the reaction \(R_k\), respectively, and \(\ell_k\) is the real-valued stochastic reaction rate constant. 
	Let \(X_i(t) \in \mathbb{N}_{0}\) denote the number of molecules of species \(S_i\), \(i=1,2,\ldots,M\), at time \(t \geq 0\). Then, the state of the system at time \(t\) is  \(X(t)=(X_1(t),X_2(t),\ldots, X_M(t))^{T} \in \mathbb{N}_{0}^{M} \). 
	
	The classical stochastic modeling of biochemical networks assumes that the process of \(X\) is a  continuous time Markov chain (CTMC). In this approach, the state vector, \(X(t)\), is defined as a random variable of the Markov jump process. Each reaction channel \(R_k\) , \(k=1,2,\ldots,R,\) is specified by its stoichiometric   vector (state-change vector) and its propensity function. The stoichiometric   vector \(\nu_k=(\nu_{1k},\nu_{2k},\ldots,\nu_{Mk}) \in \mathbb{Z}^{M}\) with \(\nu_{jk}= p_{jk}-r_{jk}\), \(j=1,2,\ldots,M\),  represents the change in the state of the system after one occurrence of the reaction \(R_k\). In other words, when the reaction \(R_k\) fires, the system state \(X(t)=x\) jumps to a new state \(x+\nu_k\). Given \(X(t)=x\), the probability that one \(R_k\) reaction takes place in the time interval \([t,t+h)\) is  \(a_k(x)h+o(h)\) where \(a_k(x): \mathbb{N}_{0}^{M} \rightarrow \mathbb{R}_{+}\) represents the propensity function calculated by the law of mass action kinetics, i.e., \(a_k(x)=\ell_k \displaystyle \prod_{i=1}^{M}   {x_i \choose r_{ik}}\). Let \(Z_k(t)\) denote the number of occurrence of the reaction \(R_k\)  by the time \(t\), then the state of the system  at time \(t\) can be obtained as follows: \[X(t)=X(0)+\displaystyle \sum_{k=1}^{R}Z_k(t) \nu_k. \]
	If we represent the counting process \(Z_k(t)\) in terms of the independent Poisson process denoted by \(\xi_k\), such that \(Z_k(t)=\xi_k\Big( \displaystyle \int_{0}^{t} a_k(X(s))ds\Big) \), then the state vector of the above CTMC satisfies the following RTCM \cite{ak:11}
	\begin{equation} \label{eq:1}
	X(t)=X(0)+\displaystyle \sum_{k=1}^{R} \xi_{k}\Big( \displaystyle \int_{0}^{t} a_{k}(X(s)) ds \Big) \nu_{k}.
	\end{equation}
	Let define the following probability mass function \[p_t(x) =  \mathrm{P} (X(t)=x).\]
	
	Another way of analyzing this CTMC process is to consider the time evolution of the probability function \(p_t(x)\). This probability  mass function is the solution of the following Kolmogorov's forward equation, which is known as the CME  \cite{gill:92}
	\begin{equation}  
	\label{eq:2}
	\displaystyle \frac{\partial p_t (x) }{\partial t}=\displaystyle \sum_{k=1}^{R} [a_k(x-\nu_{k}) p_t(x-\nu_{k})- a_{k}(x) p_t(x)]. 
	\end{equation}  
	When the number of molecules in the system is very high, then the abundance of the species at time \(t\) can be represented by the real valued concentrations of the form \(U(t)=V^{-1} X(t) \in \mathbb{R}^{M}_{\geq 0}\). In most cases, reaction channels in biochemical systems are bimolecular or monomolecular. If the \(k-\)th reaction channel \(R_k\) is bimolecular or monomolecular, then its propensity function satisfies the equality \(a_{k}(x)= V \tilde{a}_k(u)\)  where \(\tilde{a}_k\) is the propensity function obtained using the deterministic reaction rate \(\tilde{\ell}_k\) \cite{wil:06}. 
	
	It is well known that the centered version of each Poisson process, \(\xi_k\), in  Equation (\ref{eq:1}) can be approximated through the independent Brownian motions \(W_k(t)\) \cite{ak:11,kur:78}. Considering the fact that  \((\xi_{k}(V t)-V t)/ \sqrt{V} \) converges in distribution to the Brownian motion  \(W_k(t)\) for large \(V\), we obtain the diffusion approximation of Equation (\ref{eq:1}) as given below: 
	\begin{equation} \label{eq:3}
	U(t)=U(0)+\displaystyle \sum_{k=1}^{R} \nu_{k} \displaystyle\int_{0}^{t} \tilde{a}_{k}(U(s)) ds+ \frac{1}{\sqrt{V}} \displaystyle \sum_{k=1}^{R} \nu_{k} W_{k} (\displaystyle\int_{0}^{t} \tilde{a}_{k}(U(s)) ds).
	\end{equation}
	
	The first and  the second summand in the right hand-side of Equation (\ref{eq:3}) are called  \textit{drift}  and  \textit{diffusion} terms, respectively. The time derivative of the state vector 
	\(U(t)\) satisfies an SDE, namely the CLE. 
	
	Let define the following probability density function 
	\[q_t(u) du =  \mathrm{P}( U(t) \in [u,u+du]).\] Then, analog of the CME for this continuous process is represented by the following FPE \cite{gill:00,gill:02}
	\begin{equation*}
	\frac{\partial q_t(u)}{\partial t}=-\displaystyle \sum_{i=1}^{M} \frac{\partial}{\partial u_{i}}[(\displaystyle \sum_{k=1}^{R} \nu_{ik}\tilde{a}_{k}(u)) q_t(u)] +\frac{1}{2} \displaystyle \sum_{i,i'=1}^{M}  \frac{\partial^2}{\partial u_{i}  \partial  u_{i'}} \Big[(\displaystyle \sum_{k=1}^{R} \nu_{ik}\nu_{i'k} \tilde{a}_k(u)) q_t(u)\Big]. 
	\end{equation*}
	Cellular processes consist of bimolecular reactions of very different speeds involving reactants of largely different abundances. Therefore, the models based only on  the RTCM or only the diffusion approximation may be inappropriate to dynamics of such multi-scale processes. In \cite{gak:015}, we developed a jump-diffusion approximation to model such processes. 
	In the following section, we will give a summary of this approximation. 
	
	\section{Jump Diffusion Approximation}
	\label{jump_diffusion}
	In  jump-diffusion approximation \cite{gak:015}, we partition the reactions into the fast subgroup, \(\mathcal{C}\), and the slow subgroup, \(\mathcal{D}\), and model the fast group using a diffusion process, while Markov chain representation is kept for the slow group. 
	
	In this approach instead of the CTMC process represented by \(X\), we focus on the scaled abundances 
	$\bar{X}^N_i = X_i/N^{\zeta_i}$, \(i=1,2,\ldots,M\), and the scaled stochastic reaction rates  \(\kappa_j=\ell_j/N^{\eta_j}\), \(j=1,2,\ldots, R\), such that $\bar{X}^N_i = O\left(1\right)$, $\kappa_j = O\left(1\right)$. 
	Naturally, these  scaled quantities will produce new scaled propensity functions as follows
	\[a_k\left(X(t)\right) = N^{\eta_{k}+r_{k}\cdot \zeta}\bar{a}_{k}(\bar{X}^N (t)),\]
	where \(r_k=(r_{1k},r_{2k},\ldots,r_{Mk} )\) and \(\zeta=(\zeta_1,\zeta_2,\ldots,\zeta_M)\). It must be noted that \(\bar{a}_{k}(.)\) functions are also \(O(1)\). Finally, scaling the time  \(t \rightarrow t N^{\theta}\) and defining \(X^{N}(t)=\bar{X}^N(tN^{\theta})\), we transform the state vector, \(X(t)\), given by  Equation (\ref{eq:1})  into the following scaled state vector 
	\begin{align}
	\label{eq:5}
	X^N\left(t\right) &= X^N\left(0\right) + \sum_{k=1}^{R} \xi_k \left( \displaystyle \int_0^t \alpha_k(X^N(s)) ds \right) \ \mu_k,
	\end{align}
	where  \( \alpha_k(X^{N}(t))= N^{\rho_k} \bar{a}_{k} (X^{N}(t))\), \(\rho_k=\theta+\eta_k+r_k \cdot \zeta\) and \(\mu_{ik}=\nu_{ik}/N^{\zeta_i}\), \(i=1,2,\ldots, M\).
	
	Modeling the fast reactions through diffusion approximation and modeling the slow reactions through Markov chains give the state vector of the jump diffusion approximation as follows: 
	\begin{equation}
	Y(t)=Y(0)+ \displaystyle \sum_{i \in \mathcal{D}} \xi_{i}(\displaystyle \int_{0}^{t} \alpha_{i} (Y (s)) ds) \mu_{i} \label{eq:6} 
	+\displaystyle \sum_{j \in \mathcal{C}} \displaystyle \int_{0}^{t}  \alpha_{j}(Y(s)) ds \, \mu_{j}
	+\displaystyle \sum_{j \in \mathcal{C}} W_{j} (\displaystyle \int_{0}^{t} \alpha_{j} (Y(s)) ds) \mu_{j},
	\end{equation}
	where \(Y(0)=X^{N}(0)\), and \(W_j\) is a standard Brownian motion. If \(\tau_1\), \(\tau_2\) denote the successive firing times of reactions from the slow group, then for \(\tau_1< t <\tau_2\), only reactions from the fast group can fire. Therefore, in this time interval, the state vector of the system is given by 
	\begin{equation}
	\label{diffusion}
	Y(t)=Y(\tau_1)+\displaystyle \sum_{j \in \mathcal{C}} \displaystyle \int_{\tau_1}^{t}  \alpha_{j}(Y(s)) ds\, \mu_{j}
	+\displaystyle \sum_{j \in \mathcal{C}} W_{j} (\displaystyle  \int_{\tau_1}^{t}  \alpha_{j} (Y(s)) ds) \mu_{j}.\\
	\end{equation}
	The main contribution of this study is the derivation of an error bound for the mean \(e(t)= \mathsf{E}\mid  X^{N}(t)-Y(t) \mid \), which is used to partition the reaction set into fast and slow subgroups. Based on this error bound, we construct a dynamic partitioning algorithm that takes into account the fact that a fast reaction can return to a slow reaction or vice versa during the course of time.

	By describing the state vector of the system as the summation of purely discrete and purely continuous components, we can  introduce the HME, which defines the joint probability density function of the jump diffusion approximation over the reaction counting process. In the following section, we will obtain the HME. 
	
	\section{Hybrid Master Equation}
	\label{hybrid_master0}
	In jump diffusion approximation, we partition the reaction set into two subsets. As mentioned before, the first subset  \(\mathcal{C}\) involves reactions modeled by diffusion approximation, while the rest of the reactions constituting the slow set  \(\mathcal{D}\) are modeled by Markov chains. In the rest of the study, we will consider 
	that there are \(L\) slow reactions, i.e., \(\mid  \mathcal{D}\mid=L\), and \(R-L\) fast reactions in the system, i.e., \(\mid  \mathcal{C}\mid=R-L\).

	Let \(Z(t)=(Z_1(t),Z_2(t),\ldots,Z_R(t))^{T}\) be a vector of reaction counters such that \(Z_{i}(t)\) denotes the number of occurrences of  
	the reaction \(R_i\), \(i=1,2,\ldots,R\), during the time of the process until time \(t>0\). Similar to the idea of  splitting the state vector of the system into purely discrete and purely continuous parts, we also separate \(Z(t)=(D(t),C(t))^{T}\) into purely discrete and continuous parts corresponding to the reaction counters of the slow, \(D(t) \in \mathbb{N}^{L}\), and the fast reaction set, \(C(t) \in \mathbb{R}^{R-L}_{\geq 0} \)  such that \(D_i(t)=Z_i(t),i \in \mathcal{D},\) and  \(C_j(t)=Z_j(t),j \in \mathcal{C}\).  We also separate the stoichiometric  vectors such that  \(\mu_{i}^{D}=\mu_i,\, i \in \mathcal{D},\)  and \(\mu_{j}^{C}=\mu_{j},\, j \in \mathcal{C} \). 
	
	By using  Equation (\ref{eq:6}), we will define reaction counters as follows: 
	\begin{eqnarray*}
		D_i(t)&=&\xi_{i} \Big( \displaystyle \int_{0}^{t} \alpha_i(Y(s)) ds\Big)=\xi_{i} \Big( \displaystyle \int_{0}^{t} \tilde{\alpha}_i(D(s),C(s)) ds\Big), \quad \quad i \in \mathcal{D} , \\
		C_j(t)&=&\displaystyle \int_{0}^{t} \alpha_{j}(Y(s))ds+W_j\Big(\int_{0}^{t} \alpha_{j}(Y(s))ds \Big)  \\
		&=&\displaystyle \int_{0}^{t} \tilde{\alpha}_{j}(D(s), C(s))ds+W_j \Big(\int_{0}^{t} \tilde{\alpha}_{j}(D(s),C(s))ds\Big), \quad \quad j \in \mathcal{C},
	\end{eqnarray*}
	where 
	\begin{equation}
	\label{new_propensity}
	\alpha_k(y)=\alpha_k(y(0)+\displaystyle \sum_{i \in \mathcal{D}}  d_{i} \mu_{i}^{D}+\displaystyle \sum_{j \in \mathcal{C}} c_j \mu_j^{C})=\tilde{\alpha}_k(d,c), \, k=1,2,\ldots, R.
	\end{equation}
	It must be noted that if \(\tau_1,\: \tau_2\) denote the successive firing times of reactions from the slow group, then for
	\(\tau_1<t<\tau_2\),  \(C(t)\) satisfies the following equation
	\begin{equation}
	\label{reaction_counter_dif}
	C(t)=C(\tau_1)+\displaystyle \sum_{j \in \mathcal{C}} \Big( \displaystyle \int_{\tau_1}^{t} \tilde{\alpha}_{j}(d,C(s)) ds\Big)e_j+\displaystyle \sum_{j \in \mathcal{C}} W_j \Big( \displaystyle \int_{\tau_1}^{t} \tilde{\alpha}_{j}(d,C(s)) ds\Big)e_j,
	\end{equation}
	where \(d \) denotes the number of slow reactions fired until time \(\tau_1 > 0\). 
	
	The HME is the time derivative of the  joint probability density function \(p_t: \mathbb{N}^{L} \times \mathbb{R}^{R-L}_{ \geq 0} \rightarrow \mathbb{R}_{\geq 0} \) 
	\begin{equation}
	\label{jp2}
	p_t(d,c)dc= \mathrm{P}(D(t)=d, C(t)\in [c,c+dc] ). 
	\end{equation}
	Then, we can write \[p_t(d,c)=p_t(c\mid d) p_t(d),\]
	where 
	\begin{eqnarray*}
		p_t(c \mid d)\, dc &=&\mathrm{P}(C(t) \in [c,c+dc] \mid  D(t)=d)\\
		p_t(d)&=&\mathrm{P}(D(t)=d).
	\end{eqnarray*}
	To obtain  the evolution equation for  \(p_t(d,c)\), which is called  the HME, we need the following result whose details can be found in \cite{paw:67}.
	\newline
	\textsl{Result} : Let \(D(t) \in \mathbf{D} \subset \mathbb{N}^{L}\) be a discrete process  and 
	\(C(t) \in \mathbb{R}^{R-L}_{ \geq 0}\) be a continuous process. Define the joint probability density function as follows: 
	\[p_t(d,c) =p_t(c\mid d) p_t(d).\]
	\newline 
	Then, the time derivative of this joint probability function, which is referred to as  generalized Fokker-Planck equation (GFPE), has the following form 
	
	\begin{equation}
	\label{pr2}
	\frac{\partial }{\partial t }p_t(d,c)=\displaystyle \sum_{d' \in \mathbf{D}} a_{dd'}p_{t}(d',c)
	+\displaystyle \sum_{n_1,n_2,\ldots,n_{R-L}=1}^{ \infty} \left( \displaystyle \prod_{i=1}^{R-L} \frac{(-1)^{n_{i}} \frac{\partial^{n_{i}}}{\partial c_{i}^{n_{i}} }}{n_{i}!}\right) [A_{n_1,n_2,\ldots,n_{R-L}}p_t(d,c)],
	\end{equation}
	where 
	\[A_{n_1,n_2,\ldots,n_{R-L}} =\lim_{h \rightarrow 0} \frac{1}{h} \mathsf{E}[\prod_{i=1}^{R-L} \{C_{i}(t+h)-C_{i}(t)\}^{n_{i}} \mid  D(t)=d,C(t),D(t+h)=d] ,\]
	and 
	\begin{equation}
	\label{eq:100}
	a_{dd'}=\lim_{ h \rightarrow 0} \frac{1}{h}[\mathrm{P}(D(t+h)=d\mid D(t)=d', C(t))- \delta_{d d'}].
	\end{equation}
	where \(\mathsf{E}[C(t) \mid d]= \displaystyle {\int_{ \mathbb{R}_{ \geq 0}^{R-L}}} c \: p_t(c \mid d) \, dc\). 
	It is also proved that \(A_{n_{1},n_{2},\ldots,n_{R-L}}=0\) for all \(\displaystyle \sum_{i=1}^{R-L} n_{i} \geq 3\). This gives us 
	\begin{eqnarray}
	\displaystyle \sum_{n_1,n_2,\ldots,n_{R-L}=1}^{\infty}  \left( \displaystyle \prod_{i=1}^{R-L} \frac{(-1)^{n_{i}} \frac{\partial^{n_{i}}}{\partial c_{i}^{n_{i}}}}{n_{i}!} \right) [A_{n_1,n_2,\ldots,n_{R-L}}p_t(d,c)]&=&- \displaystyle \sum_{j=1}^{R-L} \frac{\partial}{\partial c_{j}} [B_{j} p_t(d,c)] \\  \label{bi}
	\nonumber &+& \frac{1}{2} \displaystyle \sum_{i,j=1}^{R-L} \frac{\partial^{2}}{\partial c_{i}   \partial c_{j}} [B_{ij}  p_t(d,c), ]
	\end{eqnarray}
	where 
	\[B_{j}= \lim_{h \rightarrow 0} \displaystyle \frac{1}{h} \mathsf{E}[(C_{j}(t+h)- C_{j}(t))\mid  D(t)=d, C(t), D(t+dt )=d] ,\]
	and
	\[B_{ij}= \lim_{h \rightarrow 0} \displaystyle \frac{1}{h} \mathsf{E}[\{C_{i}(t+h)- C_{i}(t)\} \{C_{j}(t+h)- C_{j}(t)\} \mid  D(t)=d, C(t), D(t+dt )=d].\] 
	\begin{theorem}
		Let \(Z(t)=\{ D(t),C(t)\}\) be a joint reaction counting process where \(D(t)\) is a discrete random process with states \(d \in \mathbf{D} \subset \mathbb{N}^{L}, \: L >0,\)
		and \(C(t)\) is a continuous random process with states \(c \in \mathbb{R}^{R-L}_{\geq 0}\), \(R-L>0\). Define \(Y\) as a multi-scale process whose state vector is given in  Equation (\ref{eq:6}). Then, the joint counting probability density function given in  Equation (\ref{jp2}) satisfies the following  GFPE, which is referred to as the HME in the present paper. 
		
		\begin{eqnarray}
		\label{master_main} \displaystyle \frac{\partial}{\partial t} p_{t}(d,c)&=&\displaystyle \sum_{i \in \mathcal{D}} \Big(\tilde{\alpha}_{i}(d-\bar{e}_i,c) p_{t}(d-\bar{e}_i,c)-\tilde{\alpha}_{i}(d,c)p_t(d,c)\Big) \\
		&-&\nonumber \displaystyle \sum_{j \in \mathcal{C}}\displaystyle \frac{\partial }{\partial c_j} ( \tilde{\alpha}_{j}(d,c) p_t(d,c))
		+\frac{1}{2} \displaystyle  \sum_{j \in \mathcal{C}} \frac{\partial^{2}}{\partial c_{j}^{2}} ( \tilde{\alpha}_{j}(d,c)p_t(d,c)).
		\end{eqnarray}
	\end{theorem}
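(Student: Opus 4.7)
The plan is to specialize the generalized Fokker--Planck equation~\eqref{pr2} to the concrete counting processes $D$ and $C$ defined via Equation~\eqref{eq:6}, by identifying the kernel $a_{dd'}$ and the infinitesimal conditional moments $B_j$, $B_{ij}$ from their stochastic representations, and then reading off~\eqref{master_main}.

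First I would handle the discrete part. Because $D_i(t)=\xi_i(\int_0^t \tilde{\alpha}_i(D(s),C(s))\,ds)$ is a time-changed Poisson process with instantaneous intensity $\tilde{\alpha}_i(D(t),C(t))$, the standard Markov-chain-in-jumps argument gives, for any $d'\in\mathbf{D}$,
\[
\mathrm{P}(D(t+h)=d'+\bar{e}_i\mid D(t)=d',C(t))=\tilde{\alpha}_i(d',C(t))\,h+o(h),\quad i\in\mathcal{D},
\]
with the complementary probability (no slow jump) accounting for the diagonal $1-\sum_{i\in\mathcal{D}}\tilde{\alpha}_i(d',C(t))\,h+o(h)$ and all other transitions being $o(h)$. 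Substituting into \eqref{eq:100} yields $a_{dd'}=\tilde{\alpha}_i(d-\bar{e}_i,c)$ for $d'=d-\bar{e}_i$, $a_{dd}=-\sum_{i\in\mathcal{D}}\tilde{\alpha}_i(d,c)$, and zero otherwise. The discrete sum $\sum_{d'\in\mathbf D}a_{dd'}p_t(d',c)$ then collapses to the first line of~\eqref{master_main}.

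Next I would compute $B_j$ and $B_{ij}$ for $j,i\in\mathcal{C}$. The key observation is that the event $\{D(t+h)=d\}$ on which we condition forces no slow reaction to fire in $[t,t+h]$, an event whose probability tends to $1$ as $h\downarrow 0$, so on this event $C$ obeys the pure-diffusion representation~\eqref{reaction_counter_dif} with $d$ frozen. Writing
\[
C_j(t+h)-C_j(t)=\int_t^{t+h}\tilde{\alpha}_j(d,C(s))\,ds+W_j\!\Big(\int_0^{t+h}\tilde{\alpha}_j(d,C(s))\,ds\Big)-W_j\!\Big(\int_0^{t}\tilde{\alpha}_j(d,C(s))\,ds\Big),
\]
continuity of $\tilde{\alpha}_j$ along the path and the mean/variance of a time-changed Brownian increment give
\[
B_j=\tilde{\alpha}_j(d,c),\qquad B_{jj}=\tilde{\alpha}_j(d,c),
\]
while for $i\neq j$ the independence of $W_i$ and $W_j$, together with the fact that the drift--drift product is $O(h^2)$ and the drift--Brownian cross term vanishes in expectation, yields $B_{ij}=0$. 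Plugging these into~\eqref{bi} produces precisely the second and third terms of~\eqref{master_main}.

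The main obstacle I expect is the careful passage to the limit in Step~3: isolating the contribution of the time-changed Brownian increment from the drift integral, handling the conditioning on $\{D(t+h)=d\}$ (whose probability, though tending to $1$, must not perturb the $1/h$ scaling), and justifying that the Pawula hypothesis $A_{n_1,\ldots,n_{R-L}}=0$ for $\sum n_i\geq 3$ indeed applies here—this follows because between slow-jump times $C$ is an It\^o diffusion, so its infinitesimal conditional moments of order three and higher vanish faster than $h$. Once these three ingredients are in place, the theorem is obtained by direct substitution into~\eqref{pr2}.
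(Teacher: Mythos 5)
Your proposal is correct and follows essentially the same route as the paper: specialize Pawula's GFPE by identifying the jump kernel $a_{dd'}$ from the slow-reaction intensities and the coefficients $B_j$, $B_{ij}$ from the diffusion representation of $C(t)$ between slow firings, then substitute into~\eqref{pr2} and~\eqref{bi}. The only difference is presentational — you derive $B_j$ and $B_{ij}$ explicitly from the time-changed Brownian increments where the paper cites standard references, and you read off $a_{dd}$ directly rather than via the normalization identity — but the argument is the same.
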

	\begin{proof}
		By using  Equation (\ref{pr2}) and  Equation  (\ref{bi}), we obtain
		\begin{equation}
		\frac{\partial}{\partial t} p_{t}(d,c)=\displaystyle \sum_{d' \in \mathbf{D}} a_{dd'}p_t(d',c) 
		- \displaystyle \sum_{j \in \mathcal{C}}\frac{\partial}{\partial c_{j}} (B_j p_{t}(d,c)) +\frac{1}{2}  \displaystyle \sum_{i,j \in \mathcal{C}}\frac{\partial^2}{ \partial c_i \partial c_j} (B_{ij} p_{t}(d,c)).   \label{hybrid_master}
		\end{equation}
		Now, let's  focus on the first summand on the right hand-side of  Equation (\ref{hybrid_master}), which can be rewritten as follows:
		\begin{equation}
		\label{a_dd}
		\sum_{d' \in \mathbf{D}} a_{dd'} p_{t}(d',c)=\displaystyle \sum_{\scriptsize{\begin{array}{c}
				d' \in \mathbf{D}\\
				d \neq d'
				\end{array}}}a_{dd'}p_{t}(d',c)+ a_{dd}p_{t}(d,c).
		\end{equation}
		Using Equation (\ref{eq:100}) gives  
		\[a_{dd}= \lim_{h \rightarrow 0} \frac{1}{h} [\mathrm{P}(D(t+h)=d\mid D(t)=d,C(t))-1],\]
		which can be reformulated as follows
		\[a_{dd}= \lim_{h \rightarrow 0} \frac{1}{h} [ -\displaystyle \sum_{\scriptsize{\begin{array}{c}
				d' \in \mathbf{D}\\
				d \neq d'
				\end{array}}} \mathrm{P}(D(t+h)=d'\mid  D(t)=d,C(t)) ]. \]
		By using this representation, we can rewrite  Equation (\ref{a_dd}) in the following form
		\[\displaystyle \sum_{d' \in \mathbf{D}} a_{dd'}p_{t}(d',c) = \displaystyle \sum_{\scriptsize{\begin{array}{c}
				d' \in \mathbf{D}\\
				d \neq d'
				\end{array}}} \Big( a_{dd'} p_{t}(d',c)- a_{d'd} p_{t}(d,c) \Big) .\]
		In our multi-scale process, we have \(L\) slow reactions, and one firing of the reaction \(R_j\) in this set  updates \(d\) to \(d+\bar{e}_j\). Starting from \(d\), the system can jump to \(d'=d+\bar{e}_j\), meaning that \(a_{d+\bar{e}_j,d}=\tilde{\alpha}_j(d,c)\). In the same vein, to reach \(d\), the system must supervene on \(d-\bar{e}_j\), by definition  \(a_{d,d-\bar{e}_j}= \tilde{\alpha}_{j}(d-\bar{e}_j,c).\) As a result, we obtain the desired summand as follows: 
		\begin{equation}
		\label{summand_1}
		\sum_{d' \in \mathbf{D}} a_{dd'} p_t(d',c)=\displaystyle \sum_{i \in \mathcal{D}}\Big(\tilde{\alpha}_i(d-\bar{e}_i,c) p_t(d-\bar{e}_i,c)-\tilde{\alpha}_i(d,c)p_t(d,c)\Big).
		\end{equation} 
		Now, we can concentrate on the second and the third summands of  Equation (\ref{hybrid_master}). Jump diffusion approximation is based on the idea that between two successive firing times of the slow reactions, the fast reactions continue to fire. Hence, the state vector and  also the reaction counting process of the fast reaction set  will satisfy diffusion processes (see  Equation \ref{diffusion},\ref{reaction_counter_dif}). Therefore, \(B_j\) and \(B_{ij}\) values have the forms \cite{gill:80,gill:02,kam:82}
		\begin{equation*}
		B_j= \displaystyle \sum_{k \in \mathcal{C}} e_{jk} \tilde{\alpha}_{k}(d,c), \quad B_{ij}=\displaystyle \sum_{k \in \mathcal{C}} e_{ik} e_{jk} \tilde{\alpha}_{k}(d,c). 
		\end{equation*}
		Substitution of   \(B_j\) and \(B_{ij}\) values and Equation  (\ref{summand_1})  into Equation (\ref{hybrid_master}) gives 
		\begin{eqnarray*}
			\nonumber \displaystyle \frac{\partial}{\partial t} p_{t}(d,c)&=&\displaystyle \sum_{i \in \mathcal{D}} \Big(\tilde{\alpha}_{i}(d-\bar{e}_i,c) p_{t}(d-\bar{e}_i,c)-\tilde{\alpha}_{i}(d,c)p_t(d,c)\Big) \\
			&-&\nonumber \displaystyle \sum_{j \in \mathcal{C}}\displaystyle \frac{\partial }{\partial c_j} \Big(\displaystyle \sum_{k \in \mathcal{C}} e_{jk} \tilde{\alpha}_{k}(d,c) \Big) + \frac{1}{2}  \displaystyle \sum_{i,j \in \mathcal{C}}\frac{\partial^2}{ \partial c_i \partial c_j} ( \displaystyle \sum_{k \in \mathcal{C}} e_{ik} e_{jk} \tilde{\alpha}_{k}(d,c) p_{t}(d,c)) \\
			&=&\displaystyle \sum_{i \in \mathcal{D}} \Big(\tilde{\alpha}_{i}(d-\bar{e}_i,c) p_{t}(d-\bar{e}_i,c)-\tilde{\alpha}_{i}(d,c)p_t(d,c)\Big)-\nonumber \displaystyle \sum_{j \in \mathcal{C}}\displaystyle \frac{\partial }{\partial c_j} (\tilde{\alpha}_{j}(d,c) p_{t}(d,c) ) + \frac{1}{2}  \displaystyle \sum_{j \in \mathcal{C}}\frac{\partial^2}{ \partial c_j^2} ( \tilde{\alpha}_{j}(d,c) p_{t}(d,c)),
		\end{eqnarray*}
		which completes the proof. 
	\end{proof}
	Based on the properties of the joint counting probability density function, we can write 
	\[p_t(d,c)=p_t(c\mid d)p_t(d). \]
	Since we partition reaction counters into two subsets, we will also decompose  the propensity  functions. Using mass action kinetics to compute propensities is very popular, and for this large class we partition the propensity function of the reaction \(R_k\), \(\tilde{\alpha}_k(d,c)\), \(k=1,2,\ldots,R\), as follows: 
	\begin{eqnarray*}
		\tilde{\alpha}_k(d,c)&=&\alpha_{k}(y(0)+\displaystyle \sum_{i \in \mathcal{D}}d_{i}\mu_{i}^{D}+ \displaystyle \sum_{j  \in \mathcal{C}}c_{j} \mu_{j}^{C})
		= \kappa_{k} \displaystyle \prod_{s=1}^{M}(y_s(0)+\displaystyle \sum_{i \in \mathcal{D}}d_i \mu_{si}^{D}+\displaystyle \sum_{j \in \mathcal{C}} c_j \mu_{sj}^{C})^{r_{sk}}\\
		&=&\kappa_{k} \displaystyle \prod_{s=1}^{M}(\beta_s(d)+\gamma_s(c))^{r_{sk}}
		= \kappa_{k} \displaystyle \prod_{s=1}^{M} \displaystyle \sum_{n=0}^{r_{sk} } \binom{r_{sk}}{n} \beta_{s}^{n}(d) \gamma_{s}^{r_{sk}-n}(c)
		= K \kappa_{k} \displaystyle \prod_{s=1}^{M} \displaystyle \sum_{n=0}^{r_{sk}} \beta_s^{n}(d) \gamma_{s}^{r_{sk}-n}(c),
	\end{eqnarray*} 
	where \(\beta_s(d)=y_s(0)+\displaystyle \sum_{i  \in \mathcal{D}}d_{i} \mu_{si}^{D}\), \(\gamma_{s}(c)=\displaystyle \sum_{j \in \mathcal{C}} c_{j}  \mu_{sj}^{C}\) and \(K\) is a real constant that will be ignored to simplify the notation for the reader. Based on this representation, the HME given in  Equation (\ref{master_main}) can be rewritten in the following form
	\begin{eqnarray}
	\displaystyle \frac{\partial}{\partial t} p_{t}(d,c)&=&\displaystyle \sum_{i \in \mathcal{D}}\displaystyle \prod_{s=1}^{M} \displaystyle \sum_{n=0}^{r_{si}}   \kappa_{i} \Big(\beta_{s}^{n}(d-\bar{e}_{i}) \gamma_{s}^{r_{si}-n}(c) p_{t}(d-\bar{e}_{i},c)-  \beta_{s}^{n}(d) \gamma_{s}^{r_{si}-n}(c)  p_{t}(d,c)  \Big) \label{prop} \\
	\nonumber &-&\displaystyle \sum_{j  \in \mathcal{C}}  \displaystyle \frac{\partial }{\partial c_j} \Big(
	\kappa_j \prod_{s=1}^{M} \displaystyle \sum_{n=0}^{r_{sj}} \beta_{s}^{n}(d) \gamma_{s}^{r_{sj}-n}(c) p_{t}(d,c) \Big)  +\frac{1}{2} \displaystyle \sum_{ j \in \mathcal{C}} \frac{\partial^{2}}{ \partial c_{j}^{2}} \Big(  \kappa_j  \prod_{s=1}^{M} \displaystyle \sum_{n=0}^{r_{sj}}\beta_{s}^{n}(d) \gamma_{s}^{r_{sj}-n}(c)     p_t(d,c)\Big). 
	\end{eqnarray}
	Let \(f(d): \mathbf{D} \rightarrow  \mathbb{R}\) and \(g(c):\mathbb{R}_{\geq 0}^{R-L}  \rightarrow  \mathbb{R} \) be any functions of \(d\) and \(c\) variables, respectively. To simplify the notation, we introduce one step operator 
	in the following form 
	\[ \mathcal{F}^{\bar{e}_i} \Big( f(d) g(c)\Big)= f(d+ \bar{e}_i) g(c), i \in \mathcal{D}.\]
	Based on this  representation, we define
	\begin{eqnarray*}
		\Gamma(\beta(d), \gamma(c))&=& \displaystyle \sum_{i \in \mathcal{D}} ( \mathcal{F}^{-\bar{e}_i} -I) \Big( \kappa_i \displaystyle \prod_{s=1}^{M}   \displaystyle \sum_{n=0}^{r_{si}} \beta_{s}^{n}(d) \gamma_{s}^{r_{si}-n}(c) \Big) \\
		&-& \displaystyle \sum_{j \in \mathcal{C}} \frac{ \partial }{ \partial c_j}  \Big(  \kappa_j   \prod_{s=1}^{M}   \displaystyle \sum_{n=0}^{r_{sj}} \beta_{s}^{n}(d) \gamma_{s}^{r_{sj}-n}(c) \Big) +\frac{1}{2} \displaystyle \sum_{j \in \mathcal{C}} \frac{ \partial^2 }{ \partial c_j^2} \Big(  \kappa_j   \prod_{s=1}^{M}   \displaystyle \sum_{n=0}^{r_{sj}} \beta_{s}^{n}(d) \gamma_{s}^{r_{sj}-n}(c) \Big). 
	\end{eqnarray*}
	Then, we can write  Equation (\ref{prop}) in the following form 
	\begin{equation}
	\label{master_main_operator}
	\frac{ \partial }{ \partial t} p_t(d,c) = \Gamma(\beta(d), \gamma(c)) p_t(d,c).
	\end{equation}
	In the rest of the study, we will assume that \(p_t(d,c)\) is zero at \(c=0\), \(c=\infty\) \cite{gts:11, rh:89}. 
	In the folllowing section, we will explain how we obtain the solution of this HME.

	\section{Solution of the Hybrid Master Equation}
	\label{solution_hme}
	To obtain the joint counting probability density function, \(p_t(d,c)\), described by the HME given in Equation (\ref{master_main_operator}), we will approximate the process \(C(t) \mid D(t)\) using its moments. Solving a maximum entropy problem for each conditional moment will produce the conditional probability function, \(p_t(c \mid d)\). The multiplication of \(p_t(c\mid d)\) with the marginal probabilities of the remaining discrete states, i.e., \(p_t(d)= \displaystyle \int_{\mathbb{R}_{ \geq 0}^{R-L}} p_t(d,c) \, dc,\) will give us the desired joint probability density function \(p_t(d,c)\) . 
	
	In the rest of the study, time dependent conditional means and the centered conditional moments of the process \(C(t)\mid D(t)\) will be denoted by
	\begin{eqnarray*}
		\mathsf{E}_{t}[C_m\mid d]&=&\displaystyle \int_{\mathbb{R}_{ \geq 0}^{R-L}}  c_m p_{t}(c\mid d) \,dc, \quad  m \in \mathcal{C},\\
		\mathsf{E}_{t}[\tilde{C}^{M}\mid d]&=& \displaystyle \int_{\mathbb{R}_{ \geq 0}^{R-L}}      \displaystyle \prod_{j \in \mathcal{C}}\tilde{c}_{j}^{M_j}  p_t(c \mid d)dc, 
	\end{eqnarray*}
	where \(\tilde{c}=c-\mathsf{E}_{t}[C\mid d]\), \(M=(M_1,M_2,\ldots,M_{R-L})^{T} \in \mathbb{N}^{R-L}\). 
	Now, based on the study \cite{hwkt:13}, we want to construct  a differential equation system to obtain  \(p_{t}(d)\), \(\mathsf{E}_t[ C_m\mid  d]\), \(\mathsf{E}_{t}[\tilde{C}^{M}\mid d ]\). To construct this system, we will need the following Lemma \cite{eng:06,hwkt:13}. 
	\begin{lemma} \label{lemma1}
		Let \(F_t: \mathbb{R}^{R-L}_{\geq 0} \longrightarrow \mathbb{R}\)  be a polynomial function of \(c\), and \(p_{t}(d,c)\) satisfy differential Equation (\ref{master_main_operator}). Assume that sufficiently many moments of \(p_{t}(d,c)\) with respect to \(c\) exist, and the joint counting probability density vanishes at \(c=0\) and \(c=\infty\). Define the following conditional mean
		\[\mathsf{E}_t[F_t(C)\mid d ]=\displaystyle \int_{\mathbb{R}_{ \geq 0}^{R-L}} F_t(c) p_{t}(c\mid d) \,dc. \]
		Then, 
		\[\frac{\partial}{\partial t}(\mathsf{E}_t[ F_t(C)\mid d]p_t(d))= \tilde{\Gamma}( \beta(d), F_t(c)\gamma(c)) p_t(d)+\mathsf{E}_t[ \frac{ \partial }{\partial t} F_t(C) \mid d]p_t(d),\]
		where
		\begin{eqnarray*}
			\tilde{\Gamma}(\beta(d), F_t(c)\gamma(c))&=& \displaystyle \sum_{i \in \mathcal{D}} ( \mathcal{F}^{-\bar{e}_i} -I) \Big( \kappa_i \displaystyle \prod_{s=1}^{M}   \displaystyle \sum_{n=0}^{r_{si}} \beta_{s}^{n}(d) \mathsf{E}_{t}[ F_t(C) \gamma_{s}^{r_{sj}-n}(C) \mid d] \Big)\\
			&+& \displaystyle \sum_{j \in \mathcal{C}}  \kappa_{j} \displaystyle \prod_{s=1}^{M} \displaystyle \sum_{n=0}^{r_{sj}}
			\beta_{s}^{n}(d) \mathsf{E}_{t}[ \gamma_{s}^{r_{sj}-n}(C) \frac{\partial}{\partial c_j} F_t(C) \mid d]+\frac{1}{2} 
			\displaystyle \sum_{j \in \mathcal{C}} \kappa_{j} \prod_{s=1}^{M}   \displaystyle \sum_{n=0}^{r_{sj}} \mathsf{E}_t[ 
			\gamma_{s}^{r_{sj}-{n}}(C)  \frac{\partial^{2}}{\partial c_j^{2}} F_t(C) \mid d].
		\end{eqnarray*}
	\end{lemma}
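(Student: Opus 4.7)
The plan is to start from the defining identity
$$\mathsf{E}_t[F_t(C)\mid d]\,p_t(d) \;=\; \int_{\mathbb{R}_{\geq 0}^{R-L}} F_t(c)\,p_t(d,c)\,dc,$$
differentiate both sides in $t$ under the integral sign, and substitute the HME (Equation \ref{master_main_operator}) for $\partial_t p_t(d,c)$. Differentiation under the integral is justified because $F_t$ is polynomial in $c$ and $p_t$ has sufficiently many $c$-moments by assumption. The product rule then splits the time derivative into $\int (\partial_t F_t(c))\,p_t(d,c)\,dc$, which is immediately $\mathsf{E}_t[\partial_t F_t(C)\mid d]\,p_t(d)$, plus $\int F_t(c)\,\partial_t p_t(d,c)\,dc$. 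Into the latter I substitute $\Gamma(\beta(d),\gamma(c))\,p_t(d,c)$ and process the three summands of $\Gamma$ separately.

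For the discrete part, since $F_t$ depends only on $c$ while the shift $\mathcal{F}^{-\bar e_i}$ acts only on $d$, the factor $F_t(c)$ commutes with both the shift and the $c$-integration. Pulling it through converts $\int F_t(c)\,\gamma_s^{r_{si}-n}(c)\,p_t(d,c)\,dc$ into $\mathsf{E}_t[F_t(C)\,\gamma_s^{r_{si}-n}(C)\mid d]\,p_t(d)$, which is precisely the first line of $\tilde{\Gamma}$. For the drift and diffusion parts, I would integrate by parts in $c_j$ once and twice respectively, transferring $\partial_{c_j}$ and $\partial_{c_j}^2$ from the polynomial prefactor onto $F_t$. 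The boundary contributions at $c=0$ and $c=\infty$ vanish by the stated decay assumption on $p_t(d,c)$ together with the polynomial growth of $F_t$ and of the monomials $\gamma_s^{r_{sj}-n}$. The surviving integrals again collapse to conditional expectations times $p_t(d)$, reproducing the second and third lines of $\tilde{\Gamma}$.

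The main obstacle is really bookkeeping: keeping the indices $(s,n,i,j)$ straight across the three summands and interpreting the shift $\mathcal{F}^{-\bar e_i}$ in $\tilde{\Gamma}\,p_t(d)$ as acting simultaneously on every $d$-dependent factor, namely $\beta_s(d)$, the conditional expectation $\mathsf{E}_t[\,\cdot\mid d]$, and $p_t(d)$ itself. The analytic content is modest: the boundary-vanishing hypothesis plus the polynomial character of the integrands suffices to justify both integrations by parts, while the finite-moment hypothesis ensures every integral that appears is absolutely convergent. Collecting the three rewritten contributions and adding back the $\mathsf{E}_t[\partial_t F_t(C)\mid d]\,p_t(d)$ term then reproduces exactly the claimed identity.
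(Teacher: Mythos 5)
Your proposal matches the paper's own proof in all essentials: differentiate under the integral via the Leibniz rule, split off the $\mathsf{E}_t[\partial_t F_t(C)\mid d]\,p_t(d)$ term, substitute the HME for $\partial_t p_t(d,c)$, pull $F_t(c)$ through the jump part to form conditional expectations, and integrate by parts once and twice in $c_j$ for the drift and diffusion parts with boundary terms killed by the vanishing assumption at $c=0$ and $c=\infty$. The only cosmetic difference is that you work with the operator form $\Gamma$ while the paper substitutes the explicit expanded HME, which changes nothing.
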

	\begin{proof}
		The proof of the Lemma can be found in Appendix  \ref{proof_lemma1}. 
	\end{proof}
	
	When \(F_t(c)=1\) in Lemma \ref{lemma1}, we obtain the time derivative of the marginal probability \(p_t(d)\), which is given in the following proposition. 
	\begin{Proposition}
		\label{prop1}
		\begin{equation}
		\label{marjinal}
		\frac{\partial}{\partial t} p_{t}(d)=  \displaystyle \sum_{i \in \mathcal{D}} ( \mathcal{F}^{-\bar{e}_i} -I) \Big( \kappa_i \displaystyle \prod_{s=1}^{M}   \displaystyle \sum_{n=0}^{r_{si}} \beta_{s}^{n}(d) 
		\mathsf{E}_t[\gamma_{s}^{r_{si}-n}(C)\mid d]p_t(d)\Big) 
		\end{equation}
	\end{Proposition}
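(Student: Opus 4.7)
The plan is to derive the proposition as an immediate corollary of Lemma \ref{lemma1} applied to the constant test function $F_t(c) \equiv 1$. First, observe that with this choice, $\mathsf{E}_t[F_t(C)\mid d] = \int p_t(c\mid d)\,dc = 1$, so the left-hand side of the lemma collapses to $\partial_t p_t(d)$, and the extra time-derivative term $\mathsf{E}_t[\partial_t F_t(C)\mid d]\,p_t(d)$ vanishes because $F_t$ has no time dependence.

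Next, I would inspect the three summands composing $\tilde{\Gamma}(\beta(d), F_t(c)\gamma(c))$. Because $\partial_{c_j} 1 = 0$ and $\partial_{c_j}^2 1 = 0$, both the drift sum (involving $\partial_{c_j} F_t$) and the diffusion sum (involving $\partial_{c_j}^2 F_t$) vanish identically. Only the discrete jump sum survives, and substituting $F_t(c)=1$ inside the expectation reduces $\mathsf{E}_t[F_t(C)\gamma_s^{r_{si}-n}(C)\mid d]$ to $\mathsf{E}_t[\gamma_s^{r_{si}-n}(C)\mid d]$. Assembling these observations yields exactly Equation (\ref{marjinal}).

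As an independent sanity check, one can arrive at the same identity by integrating the HME in the form of Equation (\ref{prop}) over $c \in \mathbb{R}_{\geq 0}^{R-L}$: the drift and diffusion contributions on the right-hand side integrate to zero by the fundamental theorem of calculus together with the assumed boundary behaviour $p_t(d,c) \to 0$ as $c \to 0$ and as $c \to \infty$, while the discrete part reproduces the marginal jump operator after recognising each $c$-integral of $\gamma_s^{r_{si}-n}(c)\,p_t(d,c)$ as $\mathsf{E}_t[\gamma_s^{r_{si}-n}(C)\mid d]\,p_t(d)$. Both derivations agree.

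The main (minor) obstacle is bookkeeping: one must fix the convention that the shift operator $\mathcal{F}^{-\bar{e}_i}$ acts on the full $d$-dependent expression $\beta_s^n(d)\,\mathsf{E}_t[\gamma_s^{r_{si}-n}(C)\mid d]\,p_t(d)$, including the conditional expectation (whose conditioning value also shifts) and the marginal $p_t(d)$ itself; this is dictated by the shifted factor $p_t(d-\bar{e}_i,c)$ appearing in the HME \eqref{master_main}. Once this convention is fixed, the derivation is algebraically routine and no further analytical work is required beyond invoking the lemma.
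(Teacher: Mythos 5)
Your proposal is correct and follows exactly the paper's route: the paper obtains Proposition \ref{prop1} precisely by setting \(F_t(c)=1\) in Lemma \ref{lemma1}, whereupon the drift, diffusion, and time-derivative terms vanish and only the discrete jump operator survives. Your remark that the shift operator must act on the whole \(d\)-dependent product including \(p_t(d)\) (as dictated by the \(p_t(d-\bar{e}_i,c)\) term in the HME) correctly resolves the paper's slightly loose placement of \(p_t(d)\) outside \(\tilde{\Gamma}\) in the lemma statement.
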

	The strategy of our method is to obtain \(p_{t}(d)\) and \(p_{t}(c\mid d)\) separately and construct the joint probability function using the equality \(p_t(d,c)=p_t(c\mid d)p_{t}(d)\). To obtain the conditional probability \(p_t(c\mid d)\), we will use  evolution equations of the  conditional means \(\mathsf{E}_t[C_m\mid d]\), \(m \in \mathcal{C}\),  and the centered conditional moments 
	\( \mathsf{E}_t[ \tilde{C}^{M}\mid d ]\), \(M \in \mathbb{N}^{R-L}\), which are the functions of   \(p_t(d)\), \(\mathsf{E}_t[C_m\mid d], \: \mathsf{E}_t[ \tilde{C}^{M}\mid d ]\). Equation  (\ref{marjinal}) will be the first equation of our system. It must be noted that differential equation defining the marginal probability only depends on the slow reactions. To solve this differential equation, we need to reformulate the unknown conditional means \(\mathsf{E}_t[ \gamma_s^{r_{si}-n}(C)\mid d]\)  through the known \(\mathsf{E}_t[C_m\mid d]\), \(\mathsf{E}_t[ \tilde{C}^{M}\mid d ]\).  The details of this transformation can be found in Appendix  \ref{cons_mean}.  
	
	In the following proposition, we will obtain the time evolution equation for the conditional means \(\mathsf{E}_t[C_m\mid d], \: m \in \mathcal{C}\). 
	\begin{Proposition}
		\label{prop_moment}
		\begin{eqnarray}
		p_t(d) \frac{\partial}{\partial t } \mathsf{E}_t[C_m\mid d]  &=&\displaystyle   \sum_{i \in \mathcal{D}}  
		( \mathcal{F}^{-\bar{e}_i} -I) \Big( \kappa_i \displaystyle \prod_{s=1}^{M}   \displaystyle \sum_{n=0}^{r_{si}} \beta_{s}^{n}(d) \mathsf{E}_t[C_m \gamma_{s}^{r_{si}-n}(C)\mid d] p_t(d) \Big)  \label{moment} \\
		\nonumber &+& \displaystyle\sum_{j \in \mathcal{C}}  \kappa_j \displaystyle \prod_{s=1}^{M} \displaystyle \sum_{n=0}^{r_{sj}} \beta_s^{n}(d) \mathsf{E}_t[\gamma_{s}^{r_{sj}-n}(C) \delta_{jm} \mid d]p_t(d)- \mathsf{E}_t[C_m\mid d] \frac{\partial}{\partial t }  p_t(d)  
		\end{eqnarray}
		where  \(\delta_{jm}\) is the Kronecker delta function. 
	\end{Proposition}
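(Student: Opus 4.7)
The plan is to derive Proposition \ref{prop_moment} as an essentially direct corollary of Lemma \ref{lemma1} by choosing the test function $F_t(c) = c_m$ and then applying the product rule. Since the heavy lifting (integrations by parts, boundary vanishing at $c=0, \infty$, and the conversion of the HME operator into $\tilde{\Gamma}$) is already encapsulated in Lemma \ref{lemma1}, the proof reduces to bookkeeping on derivatives and a single algebraic rearrangement.

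First, I would set $F_t(c) = c_m$ for the fixed index $m \in \mathcal{C}$. This choice is time-independent, so the term $\mathsf{E}_t[\partial_t F_t(C) \mid d]\, p_t(d)$ on the right-hand side of Lemma \ref{lemma1} vanishes identically. Next, I would compute the partial derivatives that appear inside $\tilde{\Gamma}(\beta(d), F_t(c)\gamma(c))$: specifically, $\partial c_m/\partial c_j = \delta_{jm}$ and $\partial^2 c_m/\partial c_j^2 = 0$. Consequently, the Laplacian-type term in $\tilde{\Gamma}$ disappears entirely, while the drift-type term collapses to
\begin{equation*}
\sum_{j \in \mathcal{C}} \kappa_j \prod_{s=1}^{M} \sum_{n=0}^{r_{sj}} \beta_s^{n}(d)\, \mathsf{E}_t[\gamma_s^{r_{sj}-n}(C)\, \delta_{jm} \mid d],
\end{equation*}
which is precisely the middle summand in \eqref{moment}. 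The slow-reaction block of $\tilde{\Gamma}$ keeps its general form but with $F_t(c) = c_m$ tucked inside the conditional expectation, producing $\mathsf{E}_t[C_m \gamma_s^{r_{si}-n}(C) \mid d]$ and matching the first summand in \eqref{moment}.

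Finally, I would apply the ordinary product rule on the left-hand side of the identity furnished by Lemma \ref{lemma1},
\begin{equation*}
\frac{\partial}{\partial t}\bigl(\mathsf{E}_t[C_m \mid d]\, p_t(d)\bigr) = p_t(d)\, \frac{\partial}{\partial t} \mathsf{E}_t[C_m \mid d] + \mathsf{E}_t[C_m \mid d]\, \frac{\partial}{\partial t} p_t(d),
\end{equation*}
and solve for $p_t(d)\, \partial_t \mathsf{E}_t[C_m \mid d]$. Substituting the two surviving pieces of $\tilde{\Gamma}(\beta(d), c_m \gamma(c))\, p_t(d)$ on the right and moving $\mathsf{E}_t[C_m \mid d]\, \partial_t p_t(d)$ to the right-hand side yields \eqref{moment} exactly.

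There is no genuine analytic obstacle here; the only point demanding a little care is ensuring that the index conventions in $\tilde{\Gamma}$ are applied correctly (i.e., that the exponents $r_{si}-n$ in the slow-reaction sum and $r_{sj}-n$ in the fast-reaction sum are not conflated) and that the second-derivative contribution is recognized as identically zero because $c_m$ is linear in $c$. The admissibility hypotheses of Lemma \ref{lemma1}—polynomial growth of $F_t$ in $c$, existence of sufficiently many moments of $p_t(d,c)$, and vanishing of $p_t(d,c)$ at the boundaries $c=0$ and $c=\infty$—are inherited directly, since $F_t(c)=c_m$ is a degree-one polynomial.
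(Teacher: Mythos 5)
Your proposal is correct and follows essentially the same route as the paper's own proof: both substitute $F_t(c)=c_m$ into Lemma \ref{lemma1}, use $\partial_t c_m = 0$, $\partial c_m/\partial c_j = \delta_{jm}$, and $\partial^2 c_m/\partial c_j^2 = 0$ to kill the time-derivative and diffusion terms, and then rearrange via the product rule to isolate $p_t(d)\,\partial_t \mathsf{E}_t[C_m\mid d]$. The only difference is cosmetic (the paper states the product-rule rearrangement first rather than last).
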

	\begin{proof}
		The proof of the proposition can be found in Appendix  \ref{proof_prop_moment}.
	\end{proof}
	In the following proposition, we will obtain \(p_t(d) \displaystyle \frac{\partial}{\partial t} \mathsf{E}_t[\tilde{C}^{M}\mid d]\).
	\begin{Proposition}
		\label{prop_var}
		\begin{eqnarray}
		\nonumber p_t(d) \displaystyle \frac{\partial}{\partial t} \mathsf{E}_t[\tilde{C}^{M}\mid d]&=&  \displaystyle   \sum_{i \in \mathcal{D}}  
		( \mathcal{F}^{-\bar{e}_i} -I) \Big( \kappa_i \displaystyle \prod_{s=1}^{M}   \displaystyle \sum_{n=0}^{r_{si}} \beta_{s}^{n}(d) \mathsf{E}_t[\tilde{C}^{M} \gamma_{s}^{r_{si}-n}(C)\mid d] p_t(d) \Big)\\
		&+& \displaystyle \sum_{j \in \mathcal{C}}\kappa_j \displaystyle \prod_{s=1}^{M} \displaystyle \sum_{n=0}^{r_{sj}} \beta_{s}^{n}(d) \mathsf{E}_t[M_j\gamma_{s}^{r_{sj}-n}(C)  \tilde{C}^{M-e_j}\mid d]p_t(d) \label{final}\\
		\nonumber &+& \frac{1}{2}  \displaystyle \sum_{j \in \mathcal{C}} \kappa_j   \displaystyle \prod_{s=1}^{M} \displaystyle \sum_{n=0}^{r_{sj}} \beta_s^{n}(d) \mathsf{E}_t[M_j (M_j-1) \gamma_{s}^{r_{sj}-n}(C)  \tilde{C}^{M-2e_j} \mid d ]  p_t(d) \\
		\nonumber &-& \displaystyle \sum_{j  \in \mathcal{C}}M_j \mathsf{E}_{t}[\tilde{C}^{M-e_j}\mid d] p_t(d) \frac{\partial}{\partial t }\mathsf{E}_t[C_j\mid d]- \mathsf{E}_t[\tilde{C}^{M}\mid d] \frac{\partial}{\partial t} p_t(d) .
		\end{eqnarray}
	\end{Proposition}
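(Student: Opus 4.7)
The plan is to apply Lemma~\ref{lemma1} with the specific test function $F_t(c) = \tilde{c}^M = \prod_{j\in\mathcal{C}}(c_j - m_j(t))^{M_j}$, where $m_j(t) := \mathsf{E}_t[C_j\mid d]$. For each fixed $d$ this is a polynomial in $c$ whose only time dependence enters through the centering $m(t)$, so the lemma applies directly.

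First, I would compute the three types of derivatives of $F_t$ appearing on the right-hand side of Lemma~\ref{lemma1}. Direct differentiation gives
\[
\frac{\partial}{\partial c_j}\tilde{c}^M = M_j\,\tilde{c}^{M-e_j},\qquad \frac{\partial^2}{\partial c_j^2}\tilde{c}^M = M_j(M_j-1)\,\tilde{c}^{M-2e_j},
\]
with the convention that terms with a negative multi-index are annihilated by the prefactor. Substituting these into $\tilde{\Gamma}(\beta(d), F_t(c)\gamma(c))$ produces, term by term, the first three summands on the right-hand side of Equation~(\ref{final}), after passing from the pointwise expressions $\mathsf{E}_t[\gamma_s^{r_{sj}-n}(C)\,\partial_{c_j}F_t(C)\mid d]$ to $\mathsf{E}_t[M_j\gamma_s^{r_{sj}-n}(C)\,\tilde{C}^{M-e_j}\mid d]$, and analogously for the second-derivative term.

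Next I would compute the explicit time derivative of $F_t$. By the chain rule applied to $F_t(C) = \prod_{j}(C_j - m_j(t))^{M_j}$,
\[
\frac{\partial}{\partial t}F_t(C) = -\sum_{j\in\mathcal{C}} M_j\,\tilde{C}^{M-e_j}\,\frac{\partial}{\partial t}\mathsf{E}_t[C_j\mid d],
\]
and since $\partial_t\mathsf{E}_t[C_j\mid d]$ is deterministic given $d$, it pulls out of the conditional expectation, contributing exactly the fourth summand of~(\ref{final}) after multiplication by $p_t(d)$. Finally, I would expand the left-hand side of Lemma~\ref{lemma1} by the product rule,
\[
\frac{\partial}{\partial t}\bigl(\mathsf{E}_t[\tilde{C}^M\mid d]\,p_t(d)\bigr) = p_t(d)\,\frac{\partial}{\partial t}\mathsf{E}_t[\tilde{C}^M\mid d] + \mathsf{E}_t[\tilde{C}^M\mid d]\,\frac{\partial}{\partial t}p_t(d),
\]
and move the second term to the right-hand side to isolate $p_t(d)\,\partial_t\mathsf{E}_t[\tilde{C}^M\mid d]$; this produces the final summand of~(\ref{final}).

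The main obstacle is pure bookkeeping rather than a conceptual difficulty. One must verify that the multi-index differentiations are applied consistently so that the boundary cases $M_j\in\{0,1\}$ are automatically killed by the prefactors $M_j$ and $M_j(M_j-1)$, and one must confirm that the $d$-dependence introduced by the centering does not invalidate the application of Lemma~\ref{lemma1}: because $d$ is held fixed on both sides of the lemma, the family $\{F_t(\cdot;d)\}_{d}$ can be treated one $d$ at a time, and the proof goes through without modification.
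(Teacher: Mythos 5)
Your proposal is correct and follows essentially the same route as the paper's own proof: apply Lemma~\ref{lemma1} with $F_t(c)=\tilde{c}^{M}$, compute $\partial_{c_j}\tilde{c}^{M}=M_j\tilde{c}^{M-e_j}$, $\partial^2_{c_j}\tilde{c}^{M}=M_j(M_j-1)\tilde{c}^{M-2e_j}$, and $\partial_t F_t(C)=-\sum_{j}M_j\tilde{C}^{M-e_j}\partial_t\mathsf{E}_t[C_j\mid d]$, then isolate $p_t(d)\,\partial_t\mathsf{E}_t[\tilde{C}^{M}\mid d]$ via the product rule. No substantive differences from the paper's argument.
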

	\begin{proof}
		The proof of the theorem can be found in Appendix Section  \ref{proof_prop_var}. 
	\end{proof}
	Up to this section, we have obtained the time derivatives of the marginal probabilities as well as those of  the conditional means  and the centered conditional moments. These three equations will give us the following differential equation system. 
	\begin{theorem}
		Let \(p_t(d,c)=p_t(c\mid d) p_t(d)\) satisfy  Equation (\ref{master_main}). Then, the time derivative of \( p_t(d)\), \(\mathsf{E}_t [C_m\mid d]\), \(m \in \mathcal{C}\) and \(\mathsf{E}_t[\tilde{C}^{M}\mid d] \), \(M=(M_1,M_2, \ldots, M_{R-L})\), satisfies the following system 
		\begin{eqnarray}
		\nonumber \frac{\partial}{\partial t} p_{t}(d)&=& \displaystyle \sum_{i \in \mathcal{D}} ( \mathcal{F}^{-\bar{e}_i} -I) \Big( \kappa_i \displaystyle \prod_{s=1}^{M}   \displaystyle \sum_{n=0}^{r_{si}} \beta_{s}^{n}(d) 
		\mathsf{E}_t[\gamma_{s}^{r_{si}-n}(C)\mid d]p_t(d)\Big)  \\
		\nonumber  p_t(d) \frac{\partial}{\partial t } \mathsf{E}_t[C_m\mid d]  &=&\displaystyle   \sum_{i \in \mathcal{D}}  
		( \mathcal{F}^{-\bar{e}_i} -I) \Big( \kappa_i \displaystyle \prod_{s=1}^{M}   \displaystyle \sum_{n=0}^{r_{si}} \beta_{s}^{n}(d) \mathsf{E}_t[C_m \gamma_{s}^{r_{si}-n}(C)\mid d] p_t(d) \Big)  \\
			\nonumber &+& \displaystyle\sum_{j \in \mathcal{C}}  \kappa_j \displaystyle \prod_{s=1}^{M} \displaystyle \sum_{n=0}^{r_{sj}} \beta_s^{n}(d) \mathsf{E}_t[\gamma_{s}^{r_{sj}-n}(C) \delta_{jm}(C)\mid d]p_t(d)- \mathsf{E}_t[C_m\mid d] \frac{\partial}{\partial t }  p_t(d)  \\
		\nonumber p_t(d) \displaystyle \frac{\partial}{\partial t} \mathsf{E}[\tilde{C}^{M}\mid d]&=&   \displaystyle   \sum_{i \in \mathcal{D}}  
		( \mathcal{F}^{-\bar{e}_i} -I) \Big( \kappa_i \displaystyle \prod_{s=1}^{M}   \displaystyle \sum_{n=0}^{r_{si}} \beta_{s}^{n}(d) \mathsf{E}_t[\tilde{C}^{M} \gamma_{s}^{r_{si}-n}(C)\mid d] p_t(d) \Big)\\
	 &+& \displaystyle \sum_{j \in \mathcal{C}}\kappa_j \displaystyle \prod_{s=1}^{M} \displaystyle \sum_{n=0}^{r_{sj}} \beta_{s}^{n}(d) \mathsf{E}_t[M_j\gamma_{s}^{r_{sj}-n}(C)  \tilde{C}^{M-e_j}\mid d]p_t(d)  \label{total_system}  \\
		\nonumber &+& \frac{1}{2}  \displaystyle \sum_{j \in \mathcal{C}} \kappa_j   \displaystyle \prod_{s=1}^{M} \displaystyle \sum_{n=0}^{r_{sj}} \beta_s^{n}(d) \mathsf{E}_t[M_j (M_j-1) \gamma_{s}^{r_{sj}-n}(C)  \tilde{C}^{M-2e_j} \mid d ]  p_t(d) \\
		\nonumber &-& \displaystyle \sum_{j  \in \mathcal{C}}M_j \mathsf{E}_{t}[\tilde{C}^{M-e_j}\mid d] p_t(d) \frac{\partial}{\partial t }\mathsf{E}_t[C_j\mid d]- \mathsf{E}_t[\tilde{C}^{M}\mid d] \frac{\partial}{\partial t} p_t(d),
		\end{eqnarray}
		where  \(\delta_{jm}\) is kronecker delta function. Also,  \(\mathcal{F}^{\bar{e}_i}\) is a one step operator as follows:
		\[ \mathcal{F}^{\bar{e}_i} \Big( f(d) g(c)\Big)= f(d+ \bar{e}_i) g(c), i \in \mathcal{D}.\]
		where \(f(d): \mathbf{D} \rightarrow  \mathbb{R}\) and \(g(c):\mathbb{R}_{\geq 0}^{R-L}  \rightarrow  \mathbb{R} \) be any functions of \(d\) and \(c\) variables, respectively. 
	\end{theorem}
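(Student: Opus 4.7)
The stated theorem is an assembly of Propositions \ref{prop1}, \ref{prop_moment}, and \ref{prop_var}, so the plan is not to prove anything new but to re-derive all three equations uniformly from Lemma \ref{lemma1} by choosing appropriate test functions $F_t(c)$, and then to collect the results into a single system. The product rule $\partial_t(\mathsf{E}_t[F_t(C)\mid d]\,p_t(d)) = p_t(d)\,\partial_t \mathsf{E}_t[F_t(C)\mid d] + \mathsf{E}_t[F_t(C)\mid d]\,\partial_t p_t(d)$ applied to the left-hand side of Lemma \ref{lemma1} lets me isolate $p_t(d)\,\partial_t \mathsf{E}_t[F_t(C)\mid d]$ in each case.

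For the first equation I would take $F_t(c)\equiv 1$. Then $\mathsf{E}_t[1\mid d]p_t(d)=p_t(d)$, the partial derivatives $\partial F_t/\partial c_j$ and $\partial^2 F_t/\partial c_j^2$ vanish, and $\partial_t F_t=0$, so only the discrete part of $\tilde{\Gamma}$ survives. This immediately yields the evolution equation for the marginal $p_t(d)$ as in Proposition \ref{prop1}. For the second equation I would take $F_t(c)=c_m$ with $m\in\mathcal{C}$; then $\partial F_t/\partial c_j=\delta_{jm}$ and $\partial^2 F_t/\partial c_j^2=0$, so only the first-order continuous term contributes, producing the $\sum_{j\in\mathcal{C}}\kappa_j\prod_s\sum_n \beta_s^n(d)\,\mathsf{E}_t[\gamma_s^{r_{sj}-n}(C)\,\delta_{jm}\mid d]\,p_t(d)$ contribution. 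Rearranging via the product rule gives exactly Proposition \ref{prop_moment}.

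For the third equation I would take $F_t(c)=\tilde{c}^{M}=\prod_{j\in\mathcal{C}}(c_j-\mathsf{E}_t[C_j\mid d])^{M_j}$. This is the delicate case, because $F_t$ is genuinely time-dependent through the conditional means, so the chain rule gives
\begin{equation*}
\partial_t F_t(c) \;=\; -\sum_{j\in\mathcal{C}} M_j\,\tilde{c}^{M-e_j}\,\partial_t \mathsf{E}_t[C_j\mid d].
\end{equation*}
The spatial derivatives are $\partial F_t/\partial c_j=M_j\tilde{c}^{M-e_j}$ and $\partial^2 F_t/\partial c_j^2=M_j(M_j-1)\tilde{c}^{M-2e_j}$. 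Substituting into Lemma \ref{lemma1}, moving $\mathsf{E}_t[\tilde{C}^M\mid d]\,\partial_t p_t(d)$ and the chain-rule correction $-\sum_{j\in\mathcal{C}} M_j\,\mathsf{E}_t[\tilde{C}^{M-e_j}\mid d]\,p_t(d)\,\partial_t\mathsf{E}_t[C_j\mid d]$ to the right-hand side recovers Proposition \ref{prop_var}. Assembling all three gives the stated system.

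The main obstacle is purely bookkeeping: the third equation requires one to remember that $\tilde{c}^M$ depends on $t$ through $\mathsf{E}_t[C_j\mid d]$, so the $\mathsf{E}_t[\partial_t F_t(C)\mid d]$ term in Lemma \ref{lemma1} is not zero and produces exactly the cross-term coupling $\partial_t\mathsf{E}_t[\tilde{C}^M\mid d]$ back to $\partial_t\mathsf{E}_t[C_j\mid d]$. Apart from this, the derivation is mechanical; no new analytic input beyond Lemma \ref{lemma1} and the propensity factorization $\tilde{\alpha}_k(d,c)=\kappa_k\prod_s\sum_n\binom{r_{sk}}{n}\beta_s^n(d)\gamma_s^{r_{sk}-n}(c)$ introduced before Equation (\ref{master_main_operator}) is required.
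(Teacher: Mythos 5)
Your proposal is correct and follows essentially the same route as the paper: the theorem is indeed just the assembly of Propositions \ref{prop1}, \ref{prop_moment}, and \ref{prop_var}, each of which the paper derives from Lemma \ref{lemma1} with the test functions $F_t(c)=1$, $F_t(c)=c_m$, and $F_t(c)=\tilde{c}^{M}$ respectively, combined with the product rule exactly as you describe. You also correctly identify the one subtle point, namely that $\tilde{c}^{M}$ is time-dependent through $\mathsf{E}_t[C_j\mid d]$ so that $\mathsf{E}_t[\partial_t F_t(C)\mid d]$ contributes the cross-term, which is precisely how the paper's Appendix~\ref{proof_prop_var} handles it.
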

	
	In the following section, we will explain the details of the maximum entropy method which will be used to construct the conditional probability distribution \(p_t(c \mid d)\). 
	\section{ Maximum Entropy }
	\label{entropy}
Assume that we want to obtain the solution of the HME under consideration at a specific time point 
	\(\tau >0\).  Solving the ODE system in  (\ref{total_system})  gives \(p_{\tau}(d) \), \(\mathsf{E}_{\tau}[C_m \mid d] \),\( \mathsf{E}_{\tau} [\tilde{C}^{M} \mid d]\), \(m \in \mathcal{C}\), \(M=(M_1,M_2,\ldots,M_{R-L})^{T} \in \mathbb{N}^{R-L}\) values for the system of interest. Although the marginal probabilities, \(p_{\tau}(d)\), can directly be obtained from the ODE system, we still do not know the corresponding conditional probability density function, \(p_{\tau}(c \mid d)\), which will be used to construct the joint probability, \(p_{\tau}(d,c)\), solving the corresponding HME. 
	
	To estimate  the unknown conditional probability density functions using its moments, we will use the maximum entropy approach proposed by Shannon \cite{sha:48}. 
	Assume that we have a state space \( \Omega=\mathbf{D} \times \mathbb{R}_{\geq 0}^{R-L}\) and our goal is to estimate the unknown probability density function \( p_{\tau}: \Omega \rightarrow \mathbb{R}_{\geq 0}\). Let 
	\[ \mathcal{S}_{\tau}^{M}=\displaystyle \int_{\mathbb{R}_{\geq 0}^{R-L}} \prod_{j \in \mathcal{C}} c_{j}^{M_j} p_{\tau}(c \mid d)dc, \quad M=(M_1,M_2,\ldots, M_{R-L}) \in \mathbb{N}^{R-L},\]
	denote the moments of the joint probability density function at time point \(\tau\). It must be noted that when 
	\(M=e_m\), we obtain \(\mathsf{E}_{\tau} [C_{m} \mid d]\). To guarantee that \(p_{\tau}(c \mid d)\) is a probability function, we must impose the condition \(\mathcal{S}_{\tau}^{0}=1\). Then, the approximation for the conditional probability density function \(p_{\tau}(c\mid d)\) will be obtained solving the following constrained convex optimization problem

	\begin{eqnarray*}
		\mathrm{Minimize} \quad   && \displaystyle \int_{  \mathbb{R}^{R-L}_{\geq 0}}  p_{\tau}{(c \mid d) } \ln ( p_{\tau} (c \mid d))dc \\\mathrm{Subject \: to}  \quad  \mathcal{S}_{\tau}^{0}&=&   \displaystyle \int_{  \mathbb{R}^{R-L}_{\geq 0}}  p_{\tau}{(c \mid d) } =1 \\
		\mathcal{S}_{\tau}^{e_m}&=&  \mathsf{E}_{\tau}[C_m \mid d] =\displaystyle \int_{  \mathbb{R}^{R-L}_{\geq 0}} c_m  p_{\tau}{(c \mid d) }  \\
		\mathcal{S}_{\tau}^{M}&=&  \mathsf{E}_{\tau}[C^M \mid d] =\displaystyle \int_{  \mathbb{R}^{R-L}_{\geq 0}} \displaystyle \prod_{ j \in \mathcal{C}}c_j^{M_j}   p_{\tau}{(c \mid d) }  \\
	\end{eqnarray*}
	Let \(N\) be the number of moment constraints and \(M^k\), \(k=0,1,\ldots, N\), denote different choices of vectors \newline \(M^k=(M_{1}^{k},M_{2}^{k}, \ldots, M_{R-L}^{k} )\in \mathbb{N}^{R-L}\). To impose the conditions given above, we will have \(M^0=0\), \(M^{j}=e_j\), \(j=1,2,\ldots, R-L\). Then, the solution of this constrained optimization problem can be obtained maximizing the following Lagrange function 
	\[\mathcal{L}(p_{\tau}(c\mid d), \lambda(\tau))= - \displaystyle \int_{\mathbb{R}^{R-L}_{\geq 0}}  p_{\tau}{(c \mid d) } \ln ( p_{\tau} (c \mid d))dc + \displaystyle \sum_{k=0}^{N} \lambda_{k}(\tau) \Big(  \displaystyle \int_{\mathbb{R}^{R-L}_{\geq 0}}  \displaystyle \prod_{ j \in \mathcal{C}}c_j^{M_{j}^{k}} p_{\tau}{(c \mid d) }dc -\mathcal{S}_{\tau}^{M^k}\Big), \]
	where \(\lambda_k \in \mathbb{R}\), \(k=1,2,\ldots,N\) are referred to as Lagrange multipliers. Taking the derivative of \(\mathcal{L}(p_{\tau}(c\mid d), \lambda(\tau))\) with respect to \(p_{\tau}(c \mid d)\)  will give the approximate solution of the conditional probability density for \(p_{\tau}(c \mid d)\) in the following form
	\[ p_{\tau}^{*}(c \mid d)=argmax (\mathcal{L}(p_{\tau}(c\mid d), \lambda(\tau)))=\frac{1}{Z(N,\lambda(\tau))} \exp\Big(- \displaystyle \sum_{k=0}^{N} \lambda_k(\tau) \prod_{ j \in \mathcal{C}}c_j^{M_{j}^{k}}\Big),\]
	where \(Z(N,\lambda(\tau))\) is a normalization constant
	\cite{ab:10,bv:04,bre:13}. Now, we can obtain the approximate solution of the joint probability density function which solves the HME under consideration by multiplying the obtained conditional probability function \( p_{\tau}^{*}(c \mid d)\) with the marginal probability function \(p_{\tau}(d)\).
	\section{Application}
	\label{appl_sec}
	In this section of the present study, we will implement our proposed method to the following reaction system 
	\[R_{1} : 2 S_1\stackrel{\kappa_{1}} {\longrightarrow}  2 S_2, \quad R_{2}: S_2 \stackrel{\kappa_{2}} {\longrightarrow}  S_1.\]
	The  state vector of the system at time \(t \geq 0\) is defined by \(Y(t)=(Y_1(t),Y_2(t))^{T} \in \mathbb{Z}_{ \geq 0}^{2}\), where \(Y_i(t)\) denote the number of  molecules of species \(S_i\), \(i=1,2\). 
	
	The joint  probability density function, \(p_t(d,c) \), satisfies the following CME
	\begin{eqnarray}
	\label{birth_death_cme}
	\nonumber \displaystyle\frac{\partial}{\partial t} p_t(d,c)&=& \kappa_1(y_1(0)-2(d-1)+c) p_t(d-1,c)-\kappa_1(y_1(0)-2d+c) p_t(d,c) \\
	&-&\kappa_2(y_2(0)+2d-(c-1)) p_t(d,c)+ \kappa_2(y_2(0)+2d-c) p_t(d,c).
	\end{eqnarray}
	We separate   reactions and  stoichiometric vectors  as follows:
	\[ \mathcal{D}=\{1\}, \: \mathcal{C}=\{2\},\: \mu_1^{D}=(-2,2)^{T},\:  \mu_2^{C}=(1,-1)^{T}.\]
	Propensity functions of the reactions are assumed to be 
	\[ \tilde{\alpha}_1(d,c)= \kappa_1(\beta_1(d)+\gamma_1(c)), \: \tilde{\alpha}_2(d,c)= \kappa_2(\beta_2(d)+\gamma_2(c)),\]
	where 
	\[\beta_1(d)=y_1(0)-2d, \: \gamma_1(c)=c, \: \beta_2(d)=y_2(0)+2d, \gamma_2(c)=-c .\]
	Then, the HME for the joint probability density function, \(p_t(d,c) \), is defined as given below:
	\begin{eqnarray}
	\label{birth_death_hybrid_master}
	\nonumber \displaystyle\frac{\partial}{\partial t} p_t(d,c)&=& \kappa_1(y_1(0)-2(d-1)+c) p_t(d-1,c)-\kappa_1(y_1(0)-2d+c) p_t(d,c) \\
	&-&\displaystyle\frac{\partial}{\partial c} \Big( \kappa_2(y_2(0)+2d-c) p_t(d,c)\Big)+ \displaystyle \frac{1}{2}\displaystyle\frac{\partial^2}{\partial c^2} \Big( \kappa_2(y_2(0)+2d-c) p_t(d,c) \Big)
	\end{eqnarray}
	The system of differential equation defining the marginal probabilities, the conditional means and the centered conditional moments has the following form 
	\begin{eqnarray}
	\nonumber \displaystyle \frac{\partial}{\partial t} p_{t}(d)&=& \Big(\kappa_1(y_1(0)-2(d-1)) + \kappa_1 \mathsf{E}_t[C\mid d-1] \Big) p_t(d-1) 
	-\Big(\kappa_1(y_1(0)-2d)- \kappa_1 \mathsf{E}_t[C\mid d] \Big) p_t(d) \\
	\nonumber  p_t(d) \displaystyle \frac{\partial}{\partial t} \mathsf{E}_t[C\mid d]&=& \kappa_1(y_1(0)-2(d-1)) \mathsf{E}_t[C\mid d-1] p_t(d-1)- \kappa_1(y_1(0)-2d) \mathsf{E}_t[C\mid d] p_t(d) \\
	\nonumber  &+& \kappa_1 \mathsf{E}_t[C^2\mid d-1] p_t(d-1) -\kappa_1  \mathsf{E}_t[C^2\mid d] p_t(d) 
	+ \kappa_2(y_2(0)+2d) p_t(d)-  \kappa_2 \mathsf{E}_t[C\mid d] p_t(d) \\
	&-&\mathsf{E}_t[C\mid d] \displaystyle \frac{\partial}{\partial t} p_{t}(d)\\
	\nonumber  p_t(d)\displaystyle \frac{\partial}{\partial t} \mathsf{E}_t[\tilde{C}^{2}\mid d]&=&\kappa_1(y_1(0)-2(d-1)) \mathsf{E}_t[\tilde{C}^2\mid  d-1]p_t(d-1)-\kappa_1(y_1(0)-2d) \mathsf{E}_t[\tilde{C}^2\mid d]p_t(d) \\
	\nonumber  &+&\kappa_1 \mathsf{E}_t [\tilde{C}^2 C\mid d-1] p_t(d-1)-\kappa_1 \mathsf{E}_t [\tilde{C}^2 C\mid d] p_t(d) + \kappa_2(y_2(0)+2d) p_t(d) -\kappa_2  \mathsf{E}_t [C\mid d] p_t(d) \\
	\nonumber  &-&2 \kappa_2 \mathsf{E}_t[\tilde{C}C\mid d]p_t(d)- \mathsf{E}_t[\tilde{C}^{2}\mid d]  \displaystyle \frac{\partial}{\partial t} p_{t}(d).
	\end{eqnarray}
	Based on our previous discussions, we  get the following system of differential equation which will be referred to as the moment equation system of the HME in the rest of the study
	\begin{eqnarray}
	\nonumber \displaystyle \frac{\partial}{\partial t} p_{t}(d)&=&\Big(\kappa_1(y_1(0)-2(d-1)) + \kappa_1 \mathsf{E}_t[C\mid d-1] \Big) p_t(d-1) 
	-\Big(\kappa_1(y_1(0)-2d)- \kappa_1 \mathsf{E}_t[C\mid d] \Big) p_t(d)   \label{marginal_model} \\
	\nonumber p_t(d) \displaystyle \frac{\partial}{\partial t} \mathsf{E}_t[C\mid d]&=& \kappa_1(y_1(0)-2(d-1)) \mathsf{E}_t[C\mid d-1] p_t(d-1)- \kappa_1(y_1(0)-2d) \mathsf{E}_t[C\mid d] p_t(d) \\
	&+& \kappa_1 \mathsf{E}_t[\tilde{\Psi}^2\mid d-1] p_t(d-1)+ \kappa_1  (\mathsf{E}_t[C\mid d-1])^{2} p_t(d) 
	-  \kappa_1 \mathsf{E}_t[\tilde{C}^2\mid d] p_t(d) \label{mean_model} \\
	\nonumber  &-& \kappa_1  (\mathsf{E}_t[C\mid d])^{2} p_t(d)  + \kappa_2(y_2(0)+2d) p_t(d)-  \kappa_2 \mathsf{E}_t[C\mid d] p_t(d)-\mathsf{E}_t[C\mid d] \displaystyle \frac{\partial}{\partial t} p_{t}(d)\\
	\nonumber  p_t(d) \displaystyle \frac{\partial}{\partial t} \mathsf{E}_t[\tilde{C}^{2}\mid d]&=& \Big(\mathsf{E}_t[\tilde{\Psi}^2\mid  d-1]+  \{\mathsf{E}_t[C\mid  d-1]-\mathsf{E}_t[C\mid  d] \}^2 \Big) \kappa_1(y_1(0)-2(d-1))  p_t(d-1) \\
	\nonumber&-& \kappa_1(y_1(0)-2d)  \mathsf{E}_t[\tilde{C}^2\mid  d]p_t(d) +\kappa_1 \mathsf{E}_t[C\mid  d-1] \mathsf{E}_t[\tilde{\Psi}^2\mid  d-1] p_t(d-1)  \\
	\nonumber &+& 2 \kappa_1 \{\mathsf{E}_t[C\mid  d-1]-\mathsf{E}_t[C\mid  d] \} \mathsf{E}_t[\tilde{\Psi}^2\mid  d-1] p_t(d-1)  \label{moment_model}\\
	\nonumber&+&  \kappa_1 \{\mathsf{E}_t[C\mid  d-1]-\mathsf{E}_t[C\mid  d] \}^2   \mathsf{E}_t[C \mid  d-1] p_t(d-1) - \kappa_1  \mathsf{E}_t[C\mid  d] \mathsf{E}_t[\tilde{C}^2\mid  d] p_t(d)\\
	\nonumber&+& \kappa_2(y_2(0)+ 2d) p_t(d)-\kappa_2 \mathsf{E}_t[C\mid  d]p_t(d)-2\kappa_2 \mathsf{E}_t[\tilde{C}^2\mid  d] p_t(d)
	- \mathsf{E}_t[\tilde{C}^{2}\mid d]  \displaystyle \frac{\partial}{\partial t} p_{t}(d),
	\end{eqnarray}
	where \( \tilde{\Psi}=c-\mathsf{E}_t[C \mid d -1]\). 
	
	Substitution  \(\displaystyle \frac{\partial}{\partial t} p_{t}(d)\)  into \(p_t(d) \displaystyle \frac{\partial}{\partial t} \mathsf{E}_t[C\mid d]\), \( p_t(d) \displaystyle \frac{\partial}{\partial t} \mathsf{E}_t[\tilde{C}^{2}\mid d]\) will give a system of differential equations that is expressed only in terms of the marginal probabilities, the conditional means and the second centered moments. In our application, we close moment equations setting the third and the higher moments to zero. If \( p_t(d)=0\) , then we will not be able to obtain  \(\mathsf{E}_t[C \mid d], \mathsf{E}_t[\tilde{C}^{2}\mid d] \). To avoid this drawback, in \cite{hwkt:13}, the authors proposed a successful  initialization procedure. 
	
	Based on the fact that propensity functions must be non negative, we define the state space of the system as follows: 
	\[\Omega = \mathbf{D} \times \mathbf{C}=\{(d,c) \in  \mathbf{D} \times \mathbb{R}_{\geq 0}:  y_1(0)-2d+c \geq 0,  y_2(0)+2d-c \geq 0, \mathbf{D} \subset \mathbb{N}\}. \]
	To obtain each conditional probability density function by solving the corresponding convex optimization problem on the state space of interest, we use the CVX toolbox of the MATLAB  \cite{cvx}. When the size of \(\Omega\) is very high, the dimensionality of the optimization problem increases. Therefore, the CVX cannot produce accurate results. 
	
	To keep the dimension of the optimization problems small for the CVX, we construct state space iteratively using a similar strategy to the sliding window method \cite{wgmh:10}. In summary, our strategy is to solve the moment equation system of the HME using an appropriate discretization method. At each discretization step, we check the marginal probabilities. If they are higher than a given threshold, then we extend the state space of the variable \(d\). This procedure continues until the time point of the interest is reached. Finally, depending on the state space of the variable \(d\), we construct a state space for the variable \(c\). Now, we can explain the details of the method. 
	
	In the first step of this construction, we define a feasible subset \(\Omega^0\) of \(\Omega\), \(\Omega^0=\mathbf{D}^0 \times \mathbf{C}^0\), in which the dimension of the optimization problem is acceptable for  the CVX. To avoid the problem of having \(p_0(d)=0\), we choose an initial Poisson distribution, \(p_0(d,c)\), in the state space \(\Omega^0\) and compute the corresponding \(p_0(d)\), \(\mathsf{E}_0[C \mid d]\), \(\mathsf{E}_0[\tilde{C}^2 \mid d]\), which  will be considered  as the initial conditions for the moment equation of the system of the HME. 
	
	Assume that we want to obtain the conditional counting probability density at time point \(\tau >0\). Then, we approximate the solution of the moment equation system of the HME on \([0, \tau]\) time interval using a numerical method. We choose a discretization time step \(\Delta\) and define \(t_j=j\Delta\), \(j=0,1,\ldots, J\) such that \(t_0=0\), \(t_J=\tau\). As a result, we obtain subintervals \([t_j, t_{j+1}]\), \(j=0,1,\ldots,  J-1\). Let \(p^{j}(d)\), \(\mathsf{E}^{j}[C \mid d]\), \(\mathsf{E}^{j}[\tilde{C}^2 \mid d]\) represent the approximate solution of the ODE system given in  Equation  (\ref{mean_model}) and \(\mathbf{D}^{j}\) represents the state space of \(d\)  at time point \(t_j\). To construct \(\mathbf{D}^1\), we will solve the moment equation system of the HME using initial conditions \(p^{0}(d) \equiv p_0(d)\), \( \mathsf{E}^{0} [C\mid d] \equiv \mathsf{E}_{0} [C\mid d] \), \( \mathsf{E}^{0} [\tilde{C}^2\mid d] \equiv \mathsf{E}_{0} [\tilde{C}^2\mid d] \). Then, we will obtain \(p^{1}(d)\), \(\mathsf{E}^{1}[C\mid d]\), \(\mathsf{E}^{1}[\tilde{C}^2\mid d]\) values for each \(d\) variable in the state space 
	\[\mathbf{D}^{0}=\{d: \min(\mathbf{D}^{0}) \leq d \leq \max(\mathbf{D}^{0})\}.\]
	To extend \( \mathbf{D}^{0}\), we define  a threshold \( \varepsilon >0 \) and check the marginal probability 
	\(p^{1}_{max} \equiv p^{1}(max(\mathbf{D}^{0}))\). If \(p^{1}_{max}> \varepsilon \), then we extend \(\mathbf{D}^0\) as follows:
	\[\mathbf{D}^{1}=\{d: \min(\mathbf{D}^{0}) \leq d \leq \max(\mathbf{D}^{0})+1\}.\]
To approximate the solution of   Equation (\ref{mean_model}) at time point \(t_2\), we need to initialize the system on \(\mathbf{D}^{1}\). Although we know \(p^1(d)\), \(\mathsf{E}^{1}[C\mid d]\), \(\mathsf{E}^{1}[\tilde{C}^2\mid d]\) for 
	\(d \in \mathbf{D}^{0}\), we have to impose initial conditions for \(d=\max(\mathbf{D}^{0})+1\)
	
	\[p^1( \max(\mathbf{D}^{0})+1 ) = \frac{\displaystyle \sum_{d  \in \mathbf{D}^0}  p^1(d) }{\mid  \mathbf{D}^0 \mid +1}, \quad  \mathsf{E}^{1}[C \mid  \max(\mathbf{D}^{0})+1 ]= \mathsf{E}^{1}[ C \mid  \max(\mathbf{D}^{0})], \quad  \mathsf{E}^{1}[\tilde{C}^2 \mid \max(\mathbf{D}^{0})+1] = \mathsf{E}^{1}[ \tilde{C}^2  \mid \max(\mathbf{D}^{0})],\] 
	where   \(\mid  \mathbf{D}^0 \mid \) denotes the cardinality of the subset  \(\mathbf{D}^0 \). We employ this procedure successively until the desired time point \(\tau\) is reached. Let \(\mathbf{D}^{*}\) denote the state space of \(d\) at time point \(\tau\). Here, we must choose \(\varepsilon>0\) such that, \(\mathbf{D}^{*}\) must also be in the feasible region of the  CVX. Now, we can construct the feasible state space for \(c\) denoted by \(\mathbf{C}^{*}\). Since we know initial domain \(\Omega^0=\mathbf{D}^{0} \times \mathbf{C}^{0}  \), we only need to obtain \((d,c)\) pairs for \(d \in \mathbf{D}^{*}  \setminus \mathbf{D}^{0}\). Then, for a given \(\epsilon >0\), we construct the feasible region \(\mathbf{C}^{*}\) for variable \(c\)
	as follows:
	\[ \mathbf{C}^{*}=\mathbf{C}^{0} \cup \bar{\mathbf{C}} \quad \mbox{    with }  \quad \bar{\mathbf{C}}= \displaystyle \bigcup_{d \in \mathbf{D}^{*} \setminus \mathbf{D}^{0} } \mathbf{C}_d,\]
	where 
	\[\mathbf{C}_d=\Big\{c: max(min(\mathbf{C}^{0})- \epsilon \sigma, 0) \leq c \leq max(\mathbf{C}^{0})+ \epsilon \sigma) \: \wedge  y_1(0)-2d+c \geq 0 \:  \wedge  y_2(0)+2d-c \geq 0  \wedge   d \in \mathbf{D}^{*} \setminus \mathbf{D}^{0} \Big\},\]
	where \(\sigma=\sqrt{\mathsf{E}^{J}[\tilde{C}^2 \mid d]}\). Here \(max(\mathbf{C}^{0})\) and  \(min(\mathbf{C}^{0})\) denote the maximum and the minimum values of \(c\) of pairs \((c, max(\mathbf{D}^{0})) \in \Omega^{0}\),respectively. As a result, we have a feasible region \(\Omega^{*}= \mathbf{D}^{*} \times \mathbf{C}^{*} \) for the CVX. Then, we can solve the corresponding convex optimization problems for each conditional counting probability density \(p_{\tau}(c \mid d), d \in \mathbf{D}^{*}\)  using the CVX. Finally, we can compute the approximate solution of \(p_{\tau}(d,c)\). The resulting algorithm is presented in Algorithm \ref{algo1}.
	{\scriptsize
		\begin{algorithm}[!ht]
			\DontPrintSemicolon
			\KwIn{The state vector \(Y\),  the error bound \( \varepsilon\), 
				a discretization time step \(\Delta\), stoichiometric vectors  \(\mu_{1}^{D}\), \(\mu_{2}^{C}\), 
				the initial domain \(\Omega^{0}=\mathbf{D}^{0} \times \mathbf{C}^{0} \),\(\mathbf{D}^{0}, \mathbf{C}^{0} \subset \mathbb{N}_0\), end of the simulation time 
				\( \tau >0\). }
			\KwOut{ The conditional probability at time point \(\tau\), \(p_{\tau}(c \mid d)\). }
			
			Set \(t=0\).\;
			Calculate  \(p_0(d)\), \(\mathsf{E}_0[C\mid d]\), \(\mathsf{E}_0[\tilde{C}^2\mid d]\) on domain \(\Omega^{0}\) by using a Poisson distribution. \;
			Set \(J=\tau/ \Delta\) and define \(t_j=j \Delta\), \(j=0,1,\ldots,J\).\;
			Set \(p^{0}(d) \equiv p_{0}(d)\), \(\mathsf{E}^{0}[C\mid d] \equiv \mathsf{E}_{0}[C\mid d] \), \(\mathsf{E}^{0}[\tilde{C}^2\mid d] \equiv \mathsf{E}_{0}[\tilde{C}^2\mid d] \). \;
			\For{ \(j=1,2,\ldots,J\)}{
				Solve moment equation of the HME by using any  discretization based numerical method and obtain   \(p^j(d)\), \(\mathsf{E}^{j}[C\mid d]\),  
				\(\mathsf{E}^{j}[\tilde{C}^2\mid d]\) for \(d \in \mathbf{D}^j\)\;
				\uIf { \(p^{j}(max(\mathbf{D}^{j})) > \varepsilon\) }
				{ Set \(\mathbf{D}^{j+1}=  \mathbf{D}^{j} \cup \{max(\mathbf{D}^{j})+1\}\)\;
					Set \(p^{j}(d)=p^{j}(d)\) for \(d \in \mathbf{D}^{j} \) \;
					Define   \(p^j( max(\mathbf{D}^{j})+1 ) = \frac{\displaystyle \sum_{d  \in \mathbf{D}}  p^j(d) }{\mid  \mathbf{D}^j \mid +1}, \quad 
					\mathsf{E}^{j}[C \mid  max(\mathbf{D}^{j})+1 ]= \mathsf{E}^{j}[ C \mid  max(\mathbf{D}^{j})]\), \( \mathsf{E}^{j}[\tilde{C}^2 \mid max(\mathbf{D}^{j})+1] = \mathsf{E}^{j}[ \tilde{C}^2  \mid max(\mathbf{D}^{j})],\)
				}
			}
			Set \(\mathbf{D}^{*}= \mathbf{D}^{J} \) \;
			Set \(\mathbf{C}^{*}= \mathbf{C}^{0} \) \;
			\For {\(d \in \mathbf{D}^{*} \setminus \mathbf{D}^{0}\)} {
				Obtain \[\mathbf{C}_d=\Big\{c: max(min(\mathbf{C}^{0})- \epsilon \sigma, 0) \leq c \leq max(\mathbf{C}^{0})+ \epsilon \sigma)  \wedge  y_1(0)-2d+c \geq 0 \:  \wedge  y_2(0)+2d-c \geq 0  \wedge   d \in \mathbf{D}^{*} \setminus \mathbf{D}^{0} \Big\}\] \;
				\(\mathbf{C}^{*}=\mathbf{C}^{*} \cup \mathbf{C}_d\)
			}
			For each \(d \in \mathbf{D}^{*}\) obtain \(p_{\tau}(c \mid d))\) using the CVX.
			
			\caption{ {Constructing feasible region for the CVX} }
			\label{algo1}
	\end{algorithm}}
	
	In our numerical simulation study, the state of the system is initialized \(y(0)=(50,0)^{T}\) and the reaction rate constants of \(R_1\), \(R_2\) are given by \( \kappa_1=0.2 \mathrm{s}^{-1}\),  
	\( \kappa_2=0.4 \mathrm{s}^{-1}\), respectively. We define 
	\[\Omega= \{ (d,c) : 50-2d+c \geq 0, \: 2d-c \geq 0, \: d,c \in\{0,1,2,\ldots,30\} \}.\]
	The initial state space \(\Omega^{0}\) is 
	\[\Omega^{0}=\{(d,c) \in \Omega: \: (d,c)\in\{0,1,2,\ldots,8\} \}.\]
	In Figures (\ref{fig:state_space1}) and  (\ref{fig:state_space2}), one can see the state space \(\Omega\) shown by the points with only green markers and \(\Omega^{0}\) shown by the points with black edged markers. The threshold for extending the region of the variable \(d\) is \(\varepsilon= 10^{-6}\), and \(\epsilon=2\). We obtain joint counting probability density function at time points \(\tau=0.5\) and \(\tau=1\). We have used the Euler method with fixed time step \(\Delta=10^{-4}\). Figures (\ref{fig:state_space1}) and (\ref{fig:state_space2}) also show the \(\Omega^{*}\) at time points \(\tau=0.5\) and \(\tau=1\), respectively. In both figures, 
	the state space \(\Omega^{*}\) is the union of  the points denoted by markers with black and red edges. Figures (\ref{fig:joint_cme1}) and (\ref{fig:joint_cme2}) 
	show the joint counting probability satisfying the CME given in  Equation (\ref{birth_death_cme}) at time points 
	\(\tau=0.5\) and \(\tau=1\), respectively. Figures (\ref{fig:joint_hme1}) and (\ref{fig:joint_hme2}) indicate the approximate solution of the \(p_{\tau}(d,c)\) satisfying  Equation (\ref{birth_death_hybrid_master}) obtained with Algorithm \ref{algo1} at time points 
	\(\tau=0.5\) and \(\tau=1\), respectively.

	\begin{figure} [htb]
		\centering
		\begin{subfigure}[b]{0.3\textwidth}
			\includegraphics[width=\textwidth]{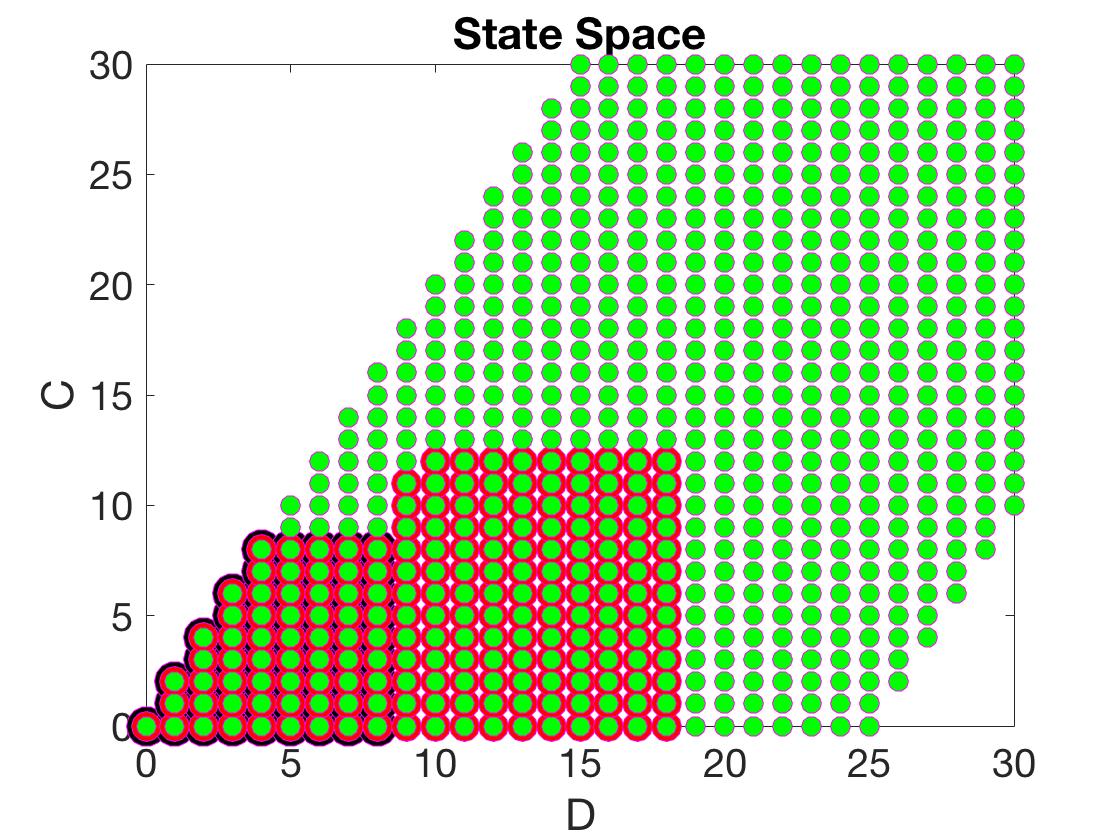}  
			\caption{}
			\label{fig:state_space1}
		\end{subfigure}
		\begin{subfigure}[b]{0.3\textwidth}
			\includegraphics[width=\textwidth]{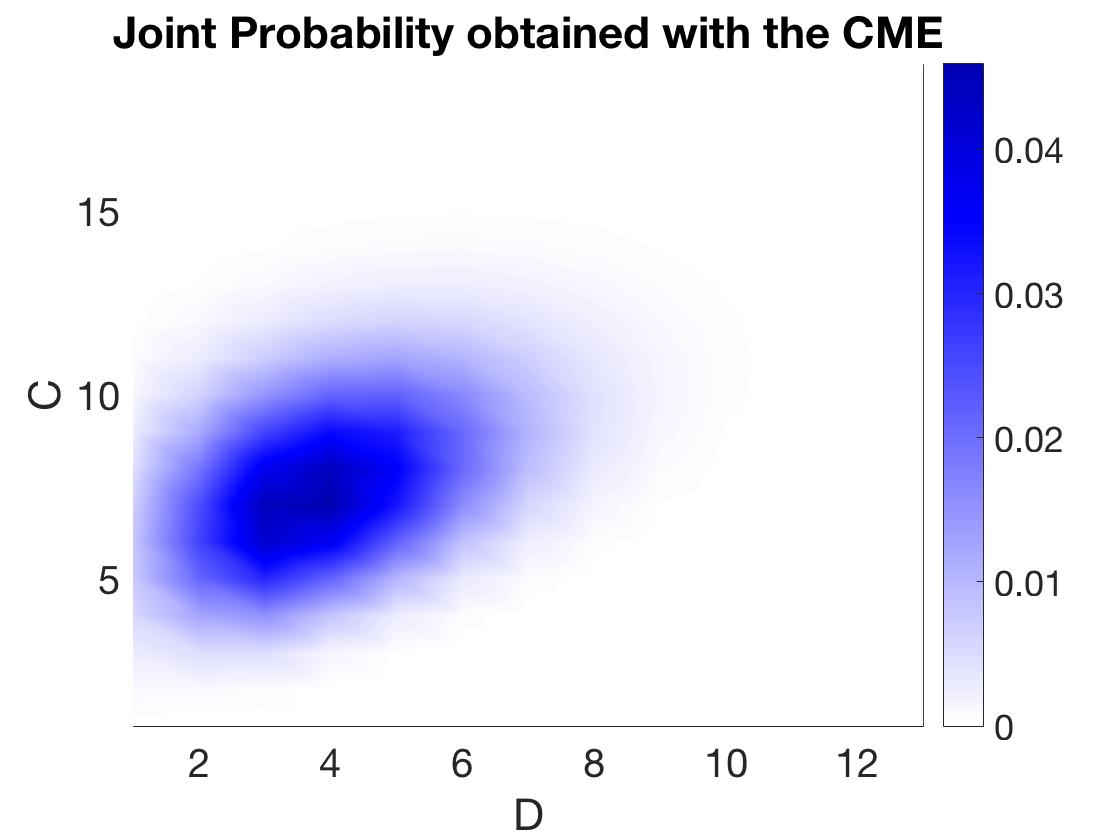}
			\caption{}
			\label{fig:joint_cme1}
		\end{subfigure}
		\begin{subfigure}[b]{0.3\textwidth}
			\includegraphics[width=\textwidth]{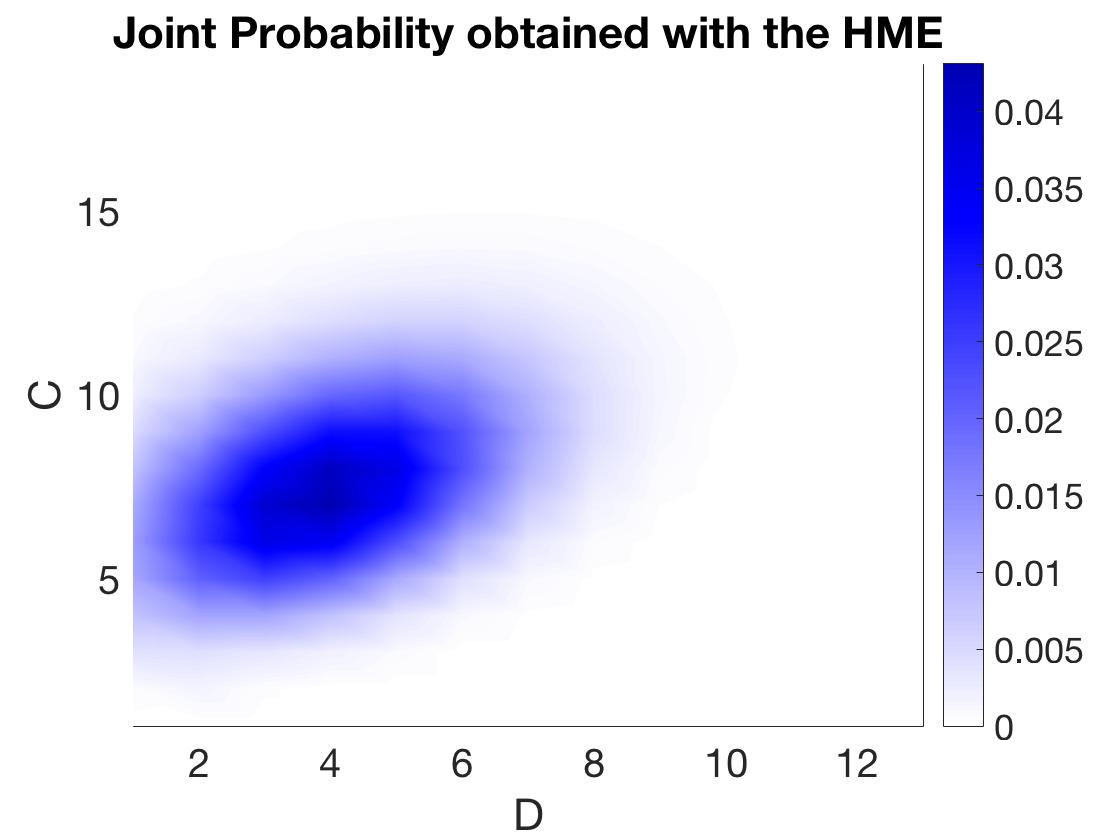}
			\caption{}
			\label{fig:joint_hme1}
		\end{subfigure}
		\caption{The joint counting probability density function at time point \(\tau=0.5\). (a) The state space \(\Omega\) is shown by the points only with green markers; \(\Omega^{0}\) is shown by the points with black edges; and \(\Omega^{*}\) is the union of the points with black and  red edges. (b) The joint counting probability density function satisfying the CME given in  Equation (\ref{birth_death_cme}) (c)The joint counting probability density function satisfying the HME given in  Equation (\ref{birth_death_hybrid_master})  }\label{fig:total1}
	\end{figure}
	
	\begin{figure} [htb]
		\centering
		\begin{subfigure}[b]{0.3\textwidth}
			\includegraphics[width=\textwidth]{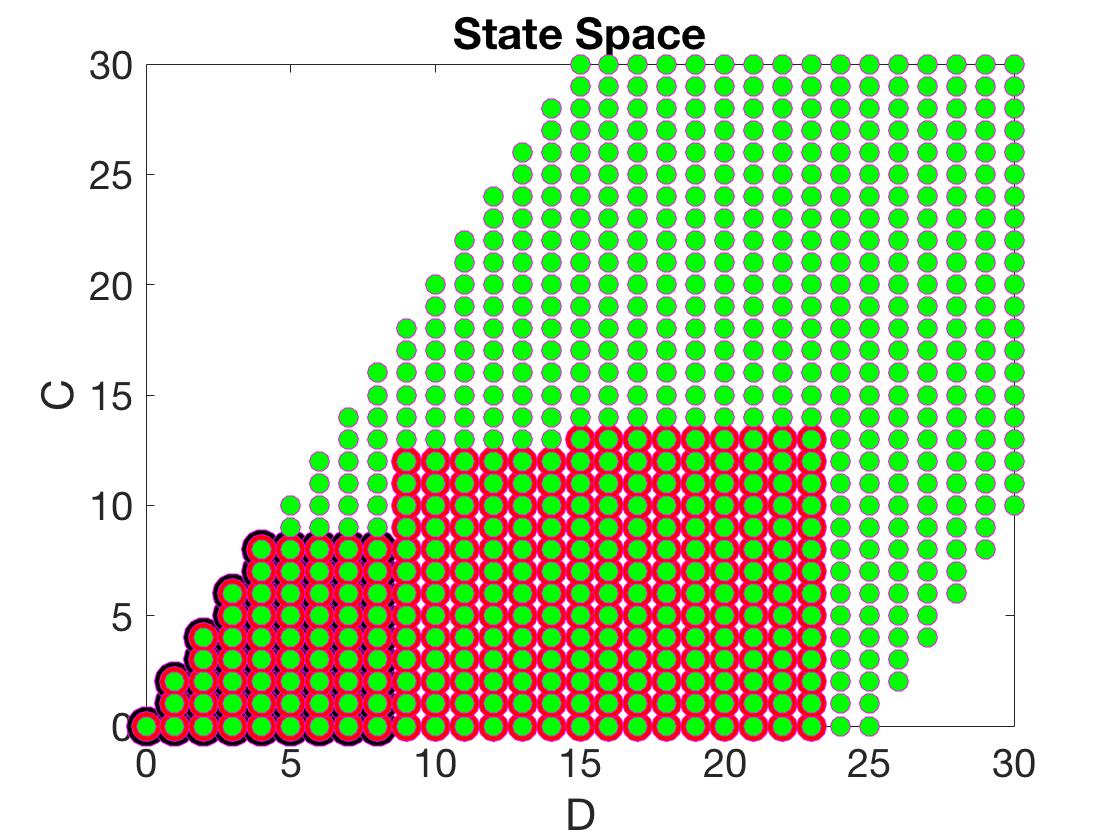}  
			\caption{}
			\label{fig:state_space2}
		\end{subfigure}
		\begin{subfigure}[b]{0.3\textwidth}
			\includegraphics[width=\textwidth]{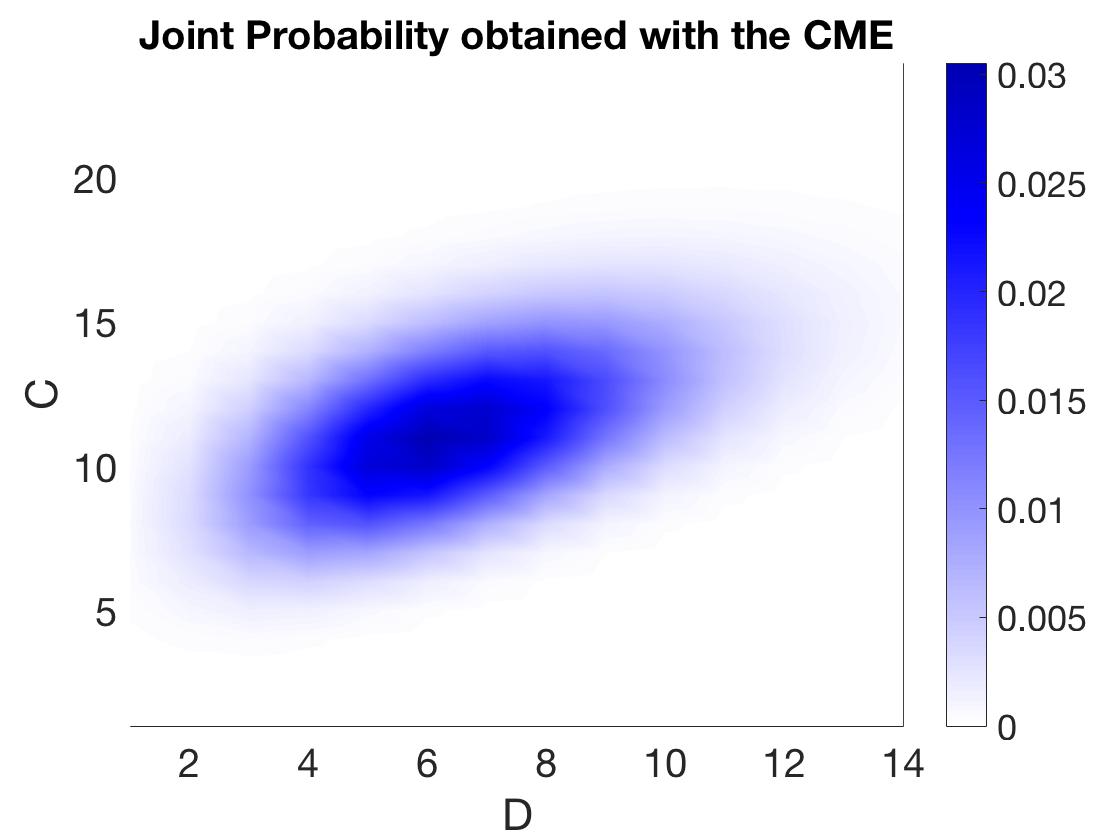}
			\caption{}
			\label{fig:joint_cme2}
		\end{subfigure}
		\begin{subfigure}[b]{0.3\textwidth}
			\includegraphics[width=\textwidth]{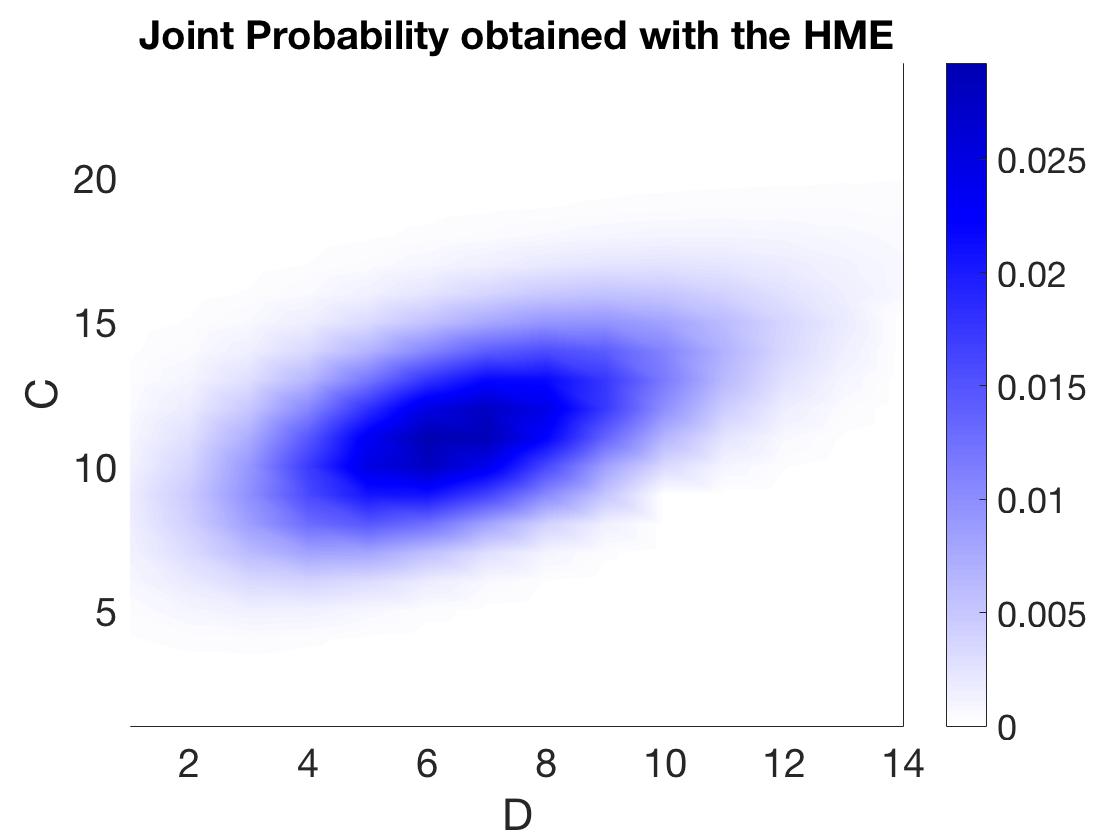}
			\caption{}
			\label{fig:joint_hme2}
		\end{subfigure}
		\caption{The joint counting probability density function at time point \(\tau=1\). (a) The state space \(\Omega\) is shown by the points only with green markers; \(\Omega^{0}\) is shown by the points with black edges; and \(\Omega^{*}\) is the union of the points with black and  red edges. (b) The joint counting probability density function satisfying the CME given in  Equation (\ref{birth_death_cme}) (c)The joint counting probability density function satisfying the HME given in  Equation (\ref{birth_death_hybrid_master})   }\label{fig:total2}
	\end{figure}

	\section{Conclusion}
	\label{conclusion0}
	In this study, we present the hybrid master equation for jump-diffusion approximation, which models systems with multi-scale nature. The idea of jump diffusion approximation is to separate reactions into fast and slow groups based on an obtained error bound. Fast reactions are modeled using diffusion approximation, while Markov chain representation is employed for slow reactions. As a result, the state vector of the system is defined as the summation of the random time  change model and the solution of the Langevin equation. In this study, based on the study of  Pawula \cite{paw:67}, we prove that joint probability density of this hybrid model over reaction counting process satisfies the hybrid master equation, which is the summation of the corresponding chemical master equation and the Fokker-Planck equation. It can be said that  while \cite{gak:015} presents a state vector representation for reaction networks  with multi-scale nature, the current study complements it by obtaining evolution equation for  the corresponding joint probability density over reaction counting process. 
	To solve this equation, we use the same strategy with \cite{hwkt:13}. We write the joint probability density function  as the product of the conditional counting probability density  of the fast reactions conditioned on the counting process of the slow reactions and the marginal probability of the counting process of the slow reactions. To construct the conditional probability density functions at a specific time point, we used the maximum entropy approach. We use the CVX toolbox of the MATLAB to solve the constrained optimization problems. Based on restrictions of the CVX on the dimensionality of the optimization problems, we present a method which constructs feasible regions for the CVX. We apply the method to a gene model.

	\appendix
	\section{Appendix}
	\subsection{ Proof of Lemma \ref{lemma1}}
	\label{proof_lemma1}
	\begin{proof}
		Using Leibniz integral rule  and the boundary conditions gives
		\begin{eqnarray*}
			\frac{\partial}{\partial t} (\mathsf{E}_t[F_t(C)\mid d ] p_{t}(d))&=& \frac{\partial}{\partial t} \Big(\displaystyle \int_{ \mathbb{R}_{\geq 0}^{R-L}}   F_t(c)p_{t}(c\mid d) p_{t}(d) dc \Big )= \displaystyle \int_{ \mathbb{R}_{\geq 0}^{R-L}} F_t(c) \frac{\partial}{\partial t} p_{t}(d,c) \, dc +  \displaystyle \int_{ \mathbb{R}_{\geq 0}^{R-L}} p_{t}(d,c) \frac{\partial}{\partial t} F_t(c)  \, dc 
		\end{eqnarray*}
		Inserting Equation (\ref{prop}) into the first integral yields
		\begin{eqnarray*}
			\frac{\partial}{\partial t}(E_t[ F_t(C)\mid d] p_{t}(d))&=& \displaystyle \int_{ \mathbb{R}_{\geq 0}^{R-L}} F_t(c) \displaystyle \sum_{i \in \mathcal{D}} \kappa_{i}   \displaystyle \prod_{s=1}^{M} \displaystyle \sum_{n=0}^{r_{si}}   \Big(\beta_{s}^{n}(d-\bar{e}_i) \gamma_{s}^{r_{si}-n}(c) p_{t}(d-\bar{e}_i,c)- \beta_{s}^{n}(d) \gamma_{s}^{r_{si}-n}(c)  p_{t}(d,c) \Big) dc \\
			&-&\int_{ \mathbb{R}_{\geq 0}^{R-L}}  F_t(c) \displaystyle \sum_{j \in \mathcal{C}} \displaystyle \frac{\partial}{\partial c_j} \Big(  \kappa_j \displaystyle \prod_{s=1}^{M} \displaystyle \sum_{n=0}^{r_{sj}} \beta_s^{n}(d) \gamma_{s}^{r_{sj}-n}(c)  p_t(d,c)\Big) \, dc \\
			&+& \frac{1}{2} \int_{ \mathbb{R}_{\geq 0}^{R-L}}  F_t(c) \displaystyle \sum_{j  \in  \mathcal{C}} \displaystyle \frac{ \partial^2}{\partial c_j^2}\Big( \kappa_j     \displaystyle \prod_{s=1}^{M} \displaystyle \sum_{n=0}^{r_{sj}} \beta_s^{n}(d) \gamma_{s}^{r_{sj}-n}(c)  p_t(d,c)   \Big) \, dc \\
			&+&\displaystyle \int_{ \mathbb{R}_{\geq 0}^{R-L}}   p_{t}(d,c)  \frac{\partial}{\partial t}  F_t(c) dc.
		\end{eqnarray*}
		Since \(F_t(c)\) is a polynomial function and sufficiently many moments of \(p_{t}(d,c)\) with respect to \(c\) exist, we can manipulate the integral as follows: 
		
		\begin{eqnarray*}
			\frac{\partial}{\partial t}(E_t[ F_t(C)\mid d]  p_{t}(d))&=& \displaystyle \sum_{i \in \mathcal{D}} \kappa_{i}   \displaystyle\int_{ \mathbb{R}_{\geq 0}^{R-L}}  \displaystyle \prod_{s=1}^{M} \displaystyle \sum_{n=0}^{r_{si}} \Big(\beta_{s}^{n}(d-\bar{e}_i)  F_t(c) \gamma_{s}^{r_{si}-n}(c) p_{t}(d-\bar{e}_i,c)- \beta_{s}^{n}(d) F_t(c) \gamma_{s}^{r_{si}-n}(c) p_{t}(d,c) \Big) dc \\
			&-&  \displaystyle \sum_{j \in \mathcal{C}} \kappa_j \displaystyle\int_{ \mathbb{R}_{\geq 0}^{R-L}}  F_t(c) \displaystyle \frac{\partial}{\partial c_i} \Big( \displaystyle \prod_{s=1}^{M} \displaystyle \sum_{n=0}^{r_{sj}} \beta_s^{n}(d) \gamma_{s}^{r_{sj}-n}(c)  p_t(d,c)\Big)dc \\
			&+& \frac{1}{2} \displaystyle \sum_{j \in \mathcal{C}}  \kappa_j \displaystyle \int_{ \mathbb{R}_{\geq 0}^{R-L}}   F_t(c)  \displaystyle \frac{ \partial^2}{\partial c_j^2}\Big(   \displaystyle \prod_{s=1}^{M} \displaystyle \sum_{n=0}^{r_{sj}} \beta_{s}^{n}(d) \gamma_{s}^{r_{sj}-n}(c) p_t(d,c)   \Big) \, dc \\
			&+&\displaystyle \int_{ \mathbb{R}_{\geq 0}^{R-L}}   p_{t}(d,c)  \frac{\partial}{\partial t}  F_t(c) \, dc.
		\end{eqnarray*}
		Here, we want to draw the attention of the reader to the following mean which is used in our equations
		\[\mathsf{E}_t[F_t(C) \gamma_s^{r_{sj}-n}(C)\beta_s(d)\mid d]= \displaystyle  \int_{ \mathbb{R}_{\geq 0}^{R-L}}  F_t(c) \gamma_s^{r_{sj}-n}(c) \beta_s^{n}(d)p_t(c\mid d) \, dc=
		\beta_s(d) \mathsf{E}_t[F_t(C) \gamma_s^{r_{sj}-n}(C) \beta_s^{n}(d)\mid d].\]
		Using this equality and the properties of the FPE will give us the following equation   \cite{csos:16,ov:11}:
		\begin{eqnarray*}
			\frac{\partial}{\partial t}(\mathsf{E}_t[ F_t (C)\mid d]p_t(d))&=&  \displaystyle \sum_{i \in \mathcal{D}}\kappa_{i}  \displaystyle \prod_{s=1}^{M} \displaystyle \sum_{n=0}^{r_{si}} \mathsf{E}_t[F_t (C)\gamma_{s}^{r_{si}-n}(C)\mid d-\bar{e}_i] \beta_{s}^{n}(d-\bar{e}_i) p_{t}(d-\bar{e}_i)- \mathsf{E}_t[F_t(C)\gamma_{s}^{r_{si}-n}(C)\mid d]  \beta_{s}^{n}(d)  p_{t}(d) \\
			&+& \displaystyle \sum_{j \in \mathcal{C}} \kappa_j  \displaystyle \prod_{s=1}^{M} \displaystyle \sum_{n=0}^{r_{sj}} \beta_{s}^{n}(d) \mathsf{E}_t[\gamma_{s}^{r_{sj}-n}(C) \frac{\partial}{\partial c_j} F_t(C)\mid d]p_t(d)\\
			&+& \frac{1}{2}  \displaystyle \sum_{ j \in \mathcal{C}} \kappa_j   \displaystyle \prod_{s=1}^{M} \displaystyle \sum_{n=0}^{r_{sj}} \beta_{s}^{n}(d) \mathsf{E}_t[\gamma_{s}^{r_{sj}-n}(C) \displaystyle \frac{ \partial^2}{\partial c_j^2 } F_t(C) \mid d ]  p_t(d) \\
			&+& \mathsf{E}_t[ \displaystyle \frac{\partial}{\partial t} F_t(C)\mid d] p_t(d).
		\end{eqnarray*}
		
		Define
		\begin{eqnarray*}
			\tilde{\Gamma}(\beta(d), F_t(c)\gamma(c))&=& \displaystyle \sum_{i \in \mathcal{D}} ( \mathcal{F}^{-\bar{e}_i} -I) \Big( \kappa_i \displaystyle \prod_{s=1}^{M}   \displaystyle \sum_{n=0}^{r_{si}} \beta_{s}^{n}(d) \mathsf{E}_{t}[ F_t(C) \gamma_{s}^{r_{sj}-n}(C) \mid d] \Big)\\
			&+& \displaystyle \sum_{j \in \mathcal{C}}  \kappa_{j} \displaystyle \prod_{s=1}^{M} \displaystyle \sum_{n=0}^{r_{sj}}
			\beta_{s}^{n}(d) \mathsf{E}_{t}[ \gamma_{s}^{r_{sj}-n}(C) \frac{\partial}{\partial c_j} F_t(C) \mid d]+\frac{1}{2} 
			\displaystyle \sum_{j \in \mathcal{C}} \kappa_{j} \prod_{s=1}^{M}   \displaystyle \sum_{n=0}^{r_{sj}} \mathsf{E}_t[ 
			\gamma_{s}^{r_{sj}-{n}}(C)  \frac{\partial^{2}}{\partial c_j^{2}} F_t(C) \mid d]
		\end{eqnarray*}

		Then, we obtain 
		\[ \frac{\partial}{\partial t}( \mathsf{E}_t[F_t(C) \mid d] p_t(d))=\tilde{\Gamma}(\beta(d), F_t(c)\gamma(c))p_t(d)+ \mathsf{E}_t[ \displaystyle \frac{\partial}{\partial t} F_t(C)\mid d] p_t(d) \]
		
		which completes the proof. 
	\end{proof}

	\subsection{ Mean of the propensity functions }
	\label{cons_mean} 
	To express \(\mathsf{E}_t[\gamma_s^{r_{si}-n}(C)\mid d]\)  in terms of \(\mathsf{E}_t[C_m\mid d]\), \(\mathsf{E}_t[\tilde{C}^M\mid d]\), we will use the Taylor series expansion of 
	\(\gamma_s^{r_{si}-n}(C)\) around \(\mathsf{E}_t[C\mid d]\). The Taylor polynomial of degree \(q\) for  
	\(\gamma_s^{r_{si}-n}(C)\) around \(\mathsf{E}_t[C\mid d]\) has the following form
	\begin{equation}
	\label{taylor}
	\mathrm{G}_q(c)= \displaystyle \sum_{k_1+\ldots+k_{R-L} \leq q}
	\frac{\partial_{1}^{k_1} \ldots  \partial_{R-L}^{k_{R-L}} } {k_1!  \ldots k_{R-L}!} \gamma_{s}^{r_{si}-n} (\mathsf{E}_t[C\mid d ]) (c_{k_1}-\mathsf{E}_t[ C_{k_1}\mid d])^{k_1} \ldots (c_{k_{R-L}} -\mathsf{E}_t[C_{k_{R-L}} \mid d])^{k_{R-L}},
	\end{equation}
	where \(\partial_j^{\ell}= \displaystyle \frac{\partial ^{\ell}}{\partial c_j^{\ell}}, \: \ell=k_1,\ldots,k_{R-L}\). In general cellular reactions are unimolecular or bimolecular. Therefore, the third and the higher order derivatives will be zero, meaning that the conditional mean of the  function \(\gamma_{s}^{r_{si}-n}(C)\) satisfies \cite{hwkt:13} 
	\begin{equation}
	\label{taylor_prop}
	\mathsf{E}_t[\gamma_{s}^{r_{si}-n}(C)\mid d ] = \gamma_s^{r_{si}-n}(\mathsf{E}_t[ C\mid d])+\frac{1}{2}\displaystyle \sum_{k,\ell=1}^{R-L} \frac{\partial^2}{\partial c_k   \partial c_{\ell}} \gamma_{s}^{r_{si}-n}(\mathsf{E}_t[C\mid d]) \mathsf{E}_t[ \tilde{C}^{e_k+e_\ell} \mid d].
	\end{equation}
	Here, we use the fact that  \(\mathsf{E}_t[ C_i -\mathsf{E}_t[ C_i\mid d] \mid d]=0\), \(i \in \mathcal{C}\). As a result, if we have  a reaction with linear propensity, the second term in Equation (\ref{taylor_prop}) will also be zero and we will get  \(\mathsf{E}_t[\gamma_s^{r_{si}-n}(C)\mid d ]=\gamma_{s}^{r_{si}-n}(\mathsf{E}_t[ C\mid d])\).

	\subsection{ Proof of Proposition  \ref{prop_moment}}
	\label{proof_prop_moment}
	\begin{proof}
		We will  use the product rule for derivatives as follows: 
		\begin{equation}
		\label{product}
		p_t(d) \frac{\partial}{\partial t} \mathsf{E}_t [C_m\mid d]= \frac{\partial}{\partial t} \Big( \mathsf{E}_t[C_m\mid d] p_t(d)\Big) - \mathsf{E}_t[C_m\mid d] \frac{\partial}{\partial t} p_{t}(d).
		\end{equation}
		By setting \(F_t(c)=c_m\) in Lemma  \ref{lemma1}, we can obtain the first derivative on the right hand-side of Equation (\ref{product}) as follows:
		\[\frac{\partial}{\partial t } \Big(\mathsf{E}_t[C_m\mid d]  p_t(d)\Big)= \tilde{\Gamma}(\beta(d), c_m\gamma(c)) p_t(d)+\mathsf{E}_t[\frac{\partial}{\partial t}C_m\mid d] p_t(d).\]
		By using equalities \( \displaystyle \frac{ \partial}{\partial t} C_m=0\), 
		\(\displaystyle \frac{ \partial^2}{\partial c_j^2} C_m=0\), we get 
		\begin{eqnarray*}
			p_t(d) \frac{\partial}{\partial t } \mathsf{E}_t[C_m\mid d]  &=&\displaystyle   \sum_{i \in \mathcal{D}}  
			( \mathcal{F}^{-\bar{e}_i} -I) \Big( \kappa_i \displaystyle \prod_{s=1}^{M}   \displaystyle \sum_{n=0}^{r_{si}} \beta_{s}^{n}(d) \mathsf{E}_t[C_m \gamma_{s}^{r_{si}-n}(C)\mid d] p_t(d) \Big) \\
			&+& \displaystyle\sum_{j \in \mathcal{C}}  \kappa_j \displaystyle \prod_{s=1}^{M} \displaystyle \sum_{n=0}^{r_{sj}} \beta_s^{n}(d) \mathsf{E}_t[\gamma_{s}^{r_{sj}-n}(C) \delta_{jm}\mid d]p_t(d)- \mathsf{E}_t[C_m\mid d] \frac{\partial}{\partial t }  p_t(d),
		\end{eqnarray*}
		where  \(\delta_{jm}\) is the Kronecker delta function. 
	\end{proof}
	
	In  Equation (\ref{moment}), we have the conditional mean \(\mathsf{E}_t[ \gamma_s^{r_{sj}-n}(C) \delta_{jm}\mid d ]\). If \(j \neq m\), then this term  equals to zero; otherwise, we will have \(\mathsf{E}_t[\gamma_s^{r_{sj}-n}(C)\mid d]\). By using  Equation (\ref{taylor_prop}), we can express  \(\mathsf{E}_t[ \gamma_s^{r_{sj}-n}(C)\mid d ]\) also in terms of the conditional means and the centered conditional moments. Additionally, in  Equation (\ref{moment}), we have \(\mathsf{E}_t[ C_m \gamma_s^{r_{si}-n}(C)\mid d]\), which must also be reformulated in terms of the conditional means  \(\mathsf{E}_t[ C_m\mid d], \, m \in \mathcal{C},\) and the centered conditional moments   \(\mathsf{E}_t[ \tilde{C}^{M}\mid d ]\). To achieve this goal, we will add and subtract \(\mathsf{E}_t[ C_m\mid d] \)
	term to and from \(\mathsf{E}_t[ C_m\gamma_s^{r_{si}-n}(C)\mid d] \) as follows \cite{hwkt:13}:
	\[ \mathsf{E}_t[C_m \gamma_s^{r_{si}-n}(C)\mid d]= \mathsf{E}_t [(C_m-\mathsf{E}_t[ C_m\mid d]+\mathsf{E}_t[ C_m\mid d ]) \gamma_s^{r_{si}-n}(C)\mid d ]=  \mathsf{E}_t[ \tilde{C}^{e_m} \gamma_s^{r_{si}-n}(C)\mid d ]+\mathsf{E}_t[ C_m\mid d ]  \mathsf{E}_t[ \gamma_s^{r_{si}-n}(C)\mid d ] .\] Similarly, adding and subtracting \(E_t[ C_m\mid d-\bar{e}_i]\) terms to and from \(\mathsf{E}_t[C_{m}\gamma_s^{r_{si}-n}(C)\mid d-\bar{e}_i]\) will produce
	\begin{eqnarray*}
		\mathsf{E}_t[ C_m \gamma_s^{r_{si}-n}(C)\mid d-\bar{e}_i ] &=& \mathsf{E}_t[ (C_m-\mathsf{E}_t[ C_m\mid d-\bar{e}_i ]+\mathsf{E}_t[ C_m\mid d-\bar{e}_i]) \gamma_s^{r_{si}-n}(C)\mid d-\bar{e}_i ]\\
		&=&  \mathsf{E}_t[\tilde{\Psi}^{e_m} \gamma_s^{r_{si}-n}(C)\mid d-\bar{e}_i ]+\mathsf{E}_t[C_m\mid d-\bar{e}_i ] \mathsf{E}_t[ \gamma_s^{r_{si}-n}(C)\mid d-\bar{e}_i ],  
	\end{eqnarray*}
	where \(\tilde{\Psi}=c- \mathsf{E}_t[C\mid d-\bar{e}_i ]\) and for \(M \in \mathbb{N}^{R-L}\), \(\tilde{\Psi}^{M}= \displaystyle \prod_{j \in \mathcal{C}} \tilde{\Psi}_j^{M_j}\). 
	Inserting the obtained \(\mathsf{E}_t[C_m \gamma_s^{r_{si}-n}(C)\mid d] \hbox{ and }  \mathsf{E}_t[C_m \gamma_s^{r_{si}-n}(C)\mid d-\bar{e}_i] \) values  into Equation  (\ref{moment})  will give us the following equation
	\begin{eqnarray}
	\nonumber p_t(d) \frac{\partial}{\partial t} \mathsf{E}_t[C_m\mid d] &=& \displaystyle \sum_{i \in \mathcal{D}} \displaystyle \prod_{s=1}^{M} \displaystyle \sum_{n=0}^{r_{si}} \kappa_{i} \Big(
	\mathsf{E}_t[ \tilde{\Psi}^{e_m} \gamma_s^{r_{si}-n}(C)\mid d-\bar{e}_i] +\mathsf{E}_t[ C_m\mid d-\bar{e}_i] \mathsf{E}_t[ \gamma_s^{r_{si}-n}(C) \mid d-\bar{e}_i]\Big) \beta_s^{n}(d-\bar{e}_i) p_t(d-\bar{e}_i)  \\
	&-&  \displaystyle \sum_{i \in \mathcal{D}}  \displaystyle \prod_{s=1}^{M} \displaystyle \sum_{n=0}^{r_{si}} \kappa_{i} \Big( \mathsf{E}_t [\tilde{C}^{e_m} \gamma_s^{r_{si}-n}(C)\mid d] +\mathsf{E}_t[C_m\mid d] \mathsf{E}_t[ \gamma_s^{r_{si}-n}(C) \mid d]\Big) \beta_s^{n}(d) p_t(d) \label{moment3}\\
	\nonumber &+& \displaystyle \sum_{j \in \mathcal{C}}   \kappa_{j} \displaystyle \prod_{s=1}^{M} \displaystyle \sum_{n=0}^{r_{sj}} \Big( \mathsf{E}_t[\gamma_s^{r_{sj}-n}(C) \delta_{jm} \mid d] \beta_s^{n}(d) p_t(d)\Big)- \mathsf{E}_t[C_m \mid d] \frac{\partial}{\partial t} p_t(d).  
	\end{eqnarray}
	
	As mentioned before, the Taylor series expansion of \(\gamma_s^{r_{si}-n}(C)\) around \(\mathsf{E}_t[C\mid d]\) will give us the possibility to reformulate \(\mathsf{E}_t[\gamma_s^{r_{si}-n}(C)\mid d ]\) using the conditional means and the centered conditional moments. Here, we must reformulate \(\mathsf{E}_t[\tilde{C}^{e_m} \gamma_s^{r_{si}-n}(C)\mid d ]\) and 
	\(\mathsf{E}_t[\tilde{\Psi}^{e_m} \gamma_s^{r_{si}-n}(C)\mid d-\bar{e}_i ]\) using the corresponding conditional means and the centered moments. To do this, we will use the Taylor expansion given in Equation (\ref{taylor}) as follows:
	\begin{eqnarray}
	\nonumber \mathsf{E}_t[ \tilde{C}^{M} \gamma_{s}^{r_{si}-n}(C)\mid d ]&=& \gamma_{s}^{r_{si}-n}(\mathsf{E}_t[C\mid d]) \mathsf{E}_t[ \tilde{C}^{M}\mid d] +\displaystyle \sum_{k \in \mathcal{C}} \frac{\partial}{\partial c_k} \gamma_{s}^{r_{si}-n}(\mathsf{E}_t[ C\mid d ])  \mathsf{E}_t[ \tilde{C}^{M+e_k}\mid d]\\
	&+& \frac{1}{2} \displaystyle \sum_{k \in \mathcal{C}} \frac{\partial^2}{\partial c_k  \partial c_{\ell}}  
	\gamma_{s}^{r_{si}-n}(\mathsf{E}_t[ C\mid d])   \mathsf{E}_t[ \tilde{C}^{M+e_k+e_{\ell}}\mid d], \label{taylor21n}
	\end{eqnarray}
	where \(M=(M_1,M_2,\ldots,M_{R-L})^{T} \in \mathbb{N}^{R-L}\). 
	It is clear that \(\mathsf{E}_t[\tilde{\Psi}^{M} \gamma_s^{r_{si}-n}(C)\mid d-\bar{e}_i ]\) can also be reformulated by using the Taylor expansion of \(\gamma_s^{r_{si}-n}(C)\)  around  \(\mathsf{E}_t[C\mid d-\bar{e}_i ]\). Substitution of the new representations of 
	\(\mathsf{E}_t[\tilde{C}^{M} \gamma_s^{r_{si}-n}(C)\mid d]\) and \(\mathsf{E}_t[\tilde{\Psi}^{M} \gamma_s^{r_{si}-n}(C)\mid d-\bar{e}_i ]\), which  only depend on the conditional means and the centered conditional moments conditioned on the corresponding discrete variable into  Equation  (\ref{moment}) will produce \(p_t(d) \displaystyle \frac{\partial}{\partial t} \mathsf{E}_t[C_m\mid d] \) in terms of the marginal probabilities, the conditional means and the centered conditional moments. 
	\subsection{ Proof of Proposition  \ref{prop_var}}
	\label{proof_prop_var}
	\begin{proof}
		Similar to our previous proofs, again we will use the product rule for derivatives as follows: 
		\[p_{t}(d) \frac{\partial}{\partial t } \mathsf{E}_t[\tilde{C}^{M}\mid d ] = \frac{\partial}{\partial t} \Big(\mathsf{E}_t[\tilde{C}^{M}\mid d] p_{t}(d)\Big)- \mathsf{E}_t[\tilde{C}^{M}\mid d]\frac{\partial}{\partial  t} p_{t}(d).\]
		The first term in the right hand-side of the  equation above can be obtained from Lemma \ref{lemma1}  choosing  \(F(c)=\tilde{c}^{M}\). Then, we obtain
		\begin{equation*}
		\frac{\partial}{\partial t } \Big(\mathsf{E}_t[\tilde{C}^{M}\mid d] p_t(d)\Big)=  \tilde{ \Gamma}(\beta(d), \tilde{c}^{M} \gamma(c))p_t(d)+ \mathsf{E}_t[\frac{\partial}{\partial t} \tilde{C}^{M}\mid d] p_t(d).
		\end{equation*}
		Since, we have  
		\begin{eqnarray*}
			\displaystyle \frac{\partial}{\partial c_i} \tilde{C}^M&=& \displaystyle \frac{\partial}{\partial c_i} \displaystyle \prod_{k \in \mathcal{C}} (c_k- \mathsf{E}_t[C_k\mid d])^{M_{k}}=M_i\tilde{C}^{M-e_i} \\
			\displaystyle \frac{\partial^2}{\partial c_j^2 } \tilde{C}^M&=& \displaystyle \frac{\partial^2}{\partial c_j^2} \displaystyle \prod_{k \in \mathcal{C}} (c_k- \mathsf{E}_t[C_k])^{M_{k}}=M_j (M_j-1)\tilde{C}^{M-2e_j} \\
			\mathsf{E}_t[\frac{\partial}{\partial t} \tilde{C}^{M}\mid d] &=& -\displaystyle \sum_{j \in \mathcal{C}} M_j \mathsf{E}_t[\tilde{C}^{M-e_j}\mid d] p_{t}(d) \frac{\partial}{\partial t} \mathsf{E}_t [C_j\mid d].
		\end{eqnarray*}
		We get 
		\begin{eqnarray*}
			p_t(d)\displaystyle \frac{\partial}{\partial t} \mathsf{E}_t[\tilde{C}^{M}\mid d]&=&  \displaystyle \sum_{i \in \mathcal{D}}
			\kappa_{i} \displaystyle \prod_{s=1}^{M} \displaystyle \sum_{n=0}^{r_{si}}  \Big( \beta_{s}^{n}(d-\bar{e}_i) \mathsf{E}_t[\tilde{C}^{M} \gamma_{s}^{r_{si}-n}(C)\mid d-\bar{e}_i] p_{t}(d-\bar{e}_i)- \beta_{s}^{n}(d) \mathsf{E}_t[\tilde{C}^{M} \gamma_{s}^{r_{si}-n}(C)\mid d]  p_{t}(d) \Big)\\
			&+& \displaystyle  \sum_{j \in \mathcal{C}} \kappa_j  \displaystyle \prod_{s=1}^{M} \displaystyle \sum_{n=0}^{r_{sj}} \beta_{s}^{n}(d) \mathsf{E}_t[M_j\gamma_{s}^{r_{sj}-n}(C)  \tilde{C}^{M-e_j}\mid d]p_t(d)\\
			&+& \frac{1}{2}  \displaystyle \sum_{j \in \mathcal{C}}   \kappa_j   \displaystyle \prod_{s=1}^{M} \displaystyle \sum_{n=0}^{r_{sj}} \beta_s^{n}(d) \mathsf{E}_t[M_j (M_j-1) \gamma_{s}^{r_{sj}-n}(C)  \tilde{C}^{M-2e_j} \mid d ]  p_t(d) \\
			&-& \displaystyle \sum_{j \in \mathcal{C}}  M_j \mathsf{E}_{t}[\tilde{C}^{M-e_j}\mid d] p_t(d) \frac{\partial}{\partial t }\mathsf{E}_t[C_j\mid d]- \mathsf{E}_t[\tilde{C}^{M}\mid d] \frac{\partial}{\partial t} p_t(d) .
		\end{eqnarray*}
		As a result, we obtain
		\begin{eqnarray*} 
			p_t(d) \displaystyle \frac{\partial}{\partial t} \mathsf{E}_t[\tilde{C}^{M}\mid d]&=&  \displaystyle   \sum_{i \in \mathcal{D}}  
			( \mathcal{F}^{-\bar{e}_i} -I) \Big( \kappa_i \displaystyle \prod_{s=1}^{M}   \displaystyle \sum_{n=0}^{r_{si}} \beta_{s}^{n}(d) \mathsf{E}_t[\tilde{C}^{M} \gamma_{s}^{r_{si}-n}(C)\mid d] p_t(d) \Big) \\
			&+& \displaystyle  \sum_{j \in \mathcal{C}} \kappa_j  \displaystyle \prod_{s=1}^{M} \displaystyle \sum_{n=0}^{r_{sj}} \beta_{s}^{n}(d) \mathsf{E}_t[M_j\gamma_{s}^{r_{sj}-n}(C)  \tilde{C}^{M-e_j}\mid d]p_t(d)\\
			&+& \frac{1}{2}  \displaystyle \sum_{j \in \mathcal{C}}   \kappa_j   \displaystyle \prod_{s=1}^{M} \displaystyle \sum_{n=0}^{r_{sj}} \beta_s^{n}(d) \mathsf{E}_t[M_j (M_j-1) \gamma_{s}^{r_{sj}-n}(C)  \tilde{C}^{M-2e_j} \mid d ]  p_t(d) \\
			&-& \displaystyle \sum_{j \in \mathcal{C}}  M_j \mathsf{E}_{t}[\tilde{C}^{M-e_j}\mid d] p_t(d) \frac{\partial}{\partial t }\mathsf{E}_t[C_j\mid d]- \mathsf{E}_t[\tilde{C}^{M}\mid d] \frac{\partial}{\partial t} p_t(d),
		\end{eqnarray*}
		which completes our proof. 
	\end{proof}
	
	To formulate the right hand-side of  Equation (\ref{final}) in terms of  the marginal probabilities, the conditional means  and the centered conditional moments, we must restate  \(\mathsf{E}_t[ \tilde{C}^{M} \gamma_s^{r_{si}-n}(C)\mid d-e_i ]\), \(\mathsf{E}_t[ \tilde{C}^{M} \gamma_s^{r_{si}-n}(C)\mid d]\) terms using these terms.  \(\mathsf{E}_t[ \tilde{C}^{M} \gamma_s^{r_{si}-n}(C)\mid d]\) , \(\mathsf{E}_t[M_j\gamma_{s}^{r_{sj}-n}(C)  \tilde{C}^{M-e_j}\mid d]\), \( \mathsf{E}_t[M_i M_j \gamma_{s}^{r_{sk}-n}(C)  \tilde{C}^{M-e_i-e_j} \mid d ]\) can be expressed  utilizing the corresponding Taylor series expansion given in  Equation (\ref{taylor21n}). 
	To express \(\mathsf{E}_t[ \tilde{C}^{M} \gamma_s^{r_{si}-n}(C)\mid d-e_i ]\) in terms of the marginal probabilities, the conditional means and the centered conditional moments, we will add and subtract \(\mathsf{E}_t[ C\mid d-e_i ]\) to and from \(\tilde{C}^{M}\) as follows:
	\begin{eqnarray*}
		\tilde{c}^{M}&=&\Big(c- \mathsf{E}_t[ C\mid d]\Big)^{M}=\Big(c- \mathsf{E}_t[C\mid d-\bar{e}_i]+ \mathsf{E}_t[ C\mid d-\bar{e}_i]- \mathsf{E}_t[C\mid d]\Big)^{M} \\
		&=&\displaystyle \sum_{0 \leq k \leq M} {M \choose k} \Big(\mathsf{E}_t[ C\mid d-\bar{e}_i] - \mathsf{E}_t[ C\mid d]\Big)^{M-k} \tilde{\Psi}^{k}.
	\end{eqnarray*}
	Then, we obtain 
	\begin{eqnarray*}
		\mathsf{E}[\tilde{C}^{M} \gamma_s^{r_{si}-n}(C)\mid d-\bar{e}_i]=\displaystyle \sum_{0 \leq k \leq M} {M \choose k} \Big(\mathsf{E}_t[ C\mid d-\bar{e}_i] - \mathsf{E}[ C\mid d]\Big)^{M-k} \mathsf{E}[\tilde{\Psi}^{k} \gamma_{s}^{r_{si}-n}(C)\mid d-\bar{e}_i].
	\end{eqnarray*}
Using the Taylor series expansion of  \(\gamma_{s}^{r_{si}-n}(C)\) around \(\mathsf{E}_{t}[C \mid  d-\bar{e}_i]\) gives us the corresponding Taylor series representation for 
	\(\mathsf{E}_t[\tilde{\Psi}^{k} \gamma_{s}^{r_{si}-n}(C)\mid d-\bar{e}_i]\). As a result, we can obtain the right hand-side of Equation (\ref{final}) in terms of the marginal probabilities, the conditional means and the centered moments.

	\bibliographystyle{plain}
	\bibliography{draft_hybrid_master}

\begin{thebibliography}{10}

\bibitem{ab:10}
R.~V. Abramov.
\newblock The multidimensional maximum entropy moment problem: a review of
  numerical methods.
\newblock {\em Communications in Mathematical Sciences}, 8(2):377-- 392, 2010.

\bibitem{ak:11}
D.~F. Anderson and T.~G. Kurtz.
\newblock Continuous time {M}arkov chain models for chemical reaction networks.
\newblock In H.~Koeppl, Gianluca Setti, Mario~di Bernardo, and Douglas
  Densmore, editors, {\em Design and Analysis of Biomolecular Circuits}.
  Springer-Verlag, 2011.

\bibitem{amw:15}
A.~Andreychenko, L.~Mikeev, and V.~Wolf.
\newblock Model reconstruction for moment-based stochastic chemical kinetics.
\newblock {\em ACM Trans. Model. Comput. Simul.}, 25(2), 2015.

\bibitem{bv:04}
S.~Boyd and L.~Vandenberghe.
\newblock {\em Convex Optimization}.
\newblock Cambridge University Press, 2004.

\bibitem{bre:13}
G.~L. Bretthorst.
\newblock The maximum entropy method of moments and bayesian probability
  theory.
\newblock {\em AIP Conf. Proc.}, 1553:3--15, 2013.

\bibitem{ckl:16}
L.~Cardelli, M.~Kwiatkowska, and L.~Laurenti.
\newblock A stochastic hybrid approximation for chemical kinetics based on the
  linear noise approximation.
\newblock In {\em CMSB}, Lecture Notes in Computer Science, pages 147--167.
  Springer, 2016.

\bibitem{ce:18}
A.~Chevallier and S.~Engblom.
\newblock Pathwise error bounds in multiscale variable splitting methods for
  spatial stochastic kinetics.
\newblock {\em SIAM J. NUMER. ANAL.}, 58(1):469--498, 2018.

\bibitem{cdr:09}
A.~Crudu, A.~Debussche, and O.~Radulescu.
\newblock Hybrid stochastic simplifications for multiscale gene networks.
\newblock {\em BMC Systems Biology}, 3(89), 2009.

\bibitem{csos:16}
B.~Cseke, D.~Schnoerr, M.~Opper, and G.~Sanguinetti.
\newblock Expectation propagation for continuous time stochastic processes.
\newblock {\em Journal of Physics A: Mathematical and Theoretical}, 49(49),
  2016.

\bibitem{dez:16}
A.~Duncan, R.~Erban, and K.~Zygalakis.
\newblock Hybrid framework for the simulation of stochastic chemical kinetics.
\newblock {\em Journal of Computational Physics}, 326, 2016.

\bibitem{ee:10}
A.~Eldar and M.~B. Elowitz.
\newblock Functional roles for noise in genetic circuits.
\newblock {\em Nature}, 467:167--173, 2010.

\bibitem{eng:06}
S.~Engblom.
\newblock Computing the moments of high dimensional solutions of the master
  equation.
\newblock {\em Applied Mathematics and Computation}, 180:498--515, 2006.

\bibitem{ehl:17}
S.~Engblom, A.~Hellander, and P.~L{\"{o}}tstedt.
\newblock {\em Multiscale Simulation of Stochastic Reaction-Diffusion
  Networks}, pages 55--79.
\newblock Stochastic Processes, Multiscale Modeling, and Numerical Methods for
  Computational Cellular Biology. Springer, 2017.

\bibitem{ff:02}
N.~Fedoroff and W.~Fontana.
\newblock Small numbers of big molecules.
\newblock {\em Science}, 297, 2002.

\bibitem{fcs:10}
N.~Friedman, L.~Cai, and X.S. Xie.
\newblock Stochasticity in gene expression as observed by single-molecule
  experiments in live cells.
\newblock {\em Israel Journal of Chemistry}, 49:333--342, 2010.

\bibitem{gak:015}
A.~Ganguly, D.~Alt{\i}ntan, and H.~Koeppl.
\newblock Jump-diffusion approximation of stochastic reaction dynamics: Error
  bounds and algorithms.
\newblock {\em Multiscale Model. Simul.}, 13(4):1390--1419, 2015.

\bibitem{gb:00}
M.A. Gibson and J.~Bruck.
\newblock Efficient exact stochastic simulation of chemical systems with many
  species and many channels.
\newblock {\em The Journal of Physical Chemistry A}, 104(9):1876--1889, 2000.

\bibitem{gill:76}
D.~T. Gillespie.
\newblock A general method for numerically simulating the stochastic time
  evolution of coupled chemical reactions.
\newblock {\em J. Comput. Phys.}, 22:403--434, 1976.

\bibitem{gill:80}
D.~T. Gillespie.
\newblock Approximating the master equation by {F}okker-{P}lanck type equations
  for singlevariable chemical systems.
\newblock {\em The Journal of Chemical Physics}, 72(5363), 1980.

\bibitem{gill:00}
D.~T. Gillespie.
\newblock The chemical {L}angevin equation.
\newblock {\em Journal of Chemical Physics}, 113(1):297--306, 2000.

\bibitem{gill:02}
D.~T. Gillespie.
\newblock The chemical {L}angevin and {F}okker-{P}lanck equations for the
  reversible isomerization reaction.
\newblock {\em J. Phys. Chem. A}, 106:5063--5071, 2002.

\bibitem{gill:07}
D.~T. Gillespie.
\newblock Stochastic simulation of chemical kinetics.
\newblock {\em Annu. Rev. Phys. Chem.}, 58:35--55, 2007.

\bibitem{gill:92}
D.T. Gillespie.
\newblock A rigorous derivation of the chemical master equation.
\newblock {\em Physica A}, 188:404--425, 1992.

\bibitem{cvx}
M.~Grant and S.~Boyd.
\newblock {CVX}: Matlab software for disciplined convex programming, version
  2.1, March 2014.

\bibitem{gts:11}
R.~Grima, P.~Thomas, and A.~V. Straube.
\newblock How accurate are the nonlinear chemical {F}okker-{P}lanck and
  chemical {L}angevin equations?
\newblock {\em Journal of Chemical Physics}, 135(8), 2011.

\bibitem{hwkt:13}
J.~Hasenauer, V.~Wolf, A.~Kazeroonian, and F.~J. Theis.
\newblock Method of conditional moments ({MCM}) for the chemical master
  equation : A unified framework for the method of moments and hybrid
  stochastic-deterministic models.
\newblock {\em J. Math. Biol.}, 69(3):687--735, 2014.

\bibitem{jah:11}
T.~Jahnke.
\newblock On reduced models for the chemical master equation.
\newblock {\em Multiscale Model. Simul.}, 9(4):1646--1676, 2011.

\bibitem{kam:82}
N.~G.~van Kampen.
\newblock The diffusion approximation for {M}arkov process.
\newblock In {\em Thermodynamics and Kinetics of Biological Processes}, pages
  185--195. Walter de Gruyter and Co., 1982.

\bibitem{kur:78}
Thomas~G. Kurtz.
\newblock Strong approximation theorems for density dependent {M}arkov cahins.
\newblock {\em Stochast. Process. Appl.}, 6(3):177--191, 1978.

\bibitem{lkk:09}
C.~H. Lee, K.~H. Kim, and P.~Kim.
\newblock A moment closure method for stochastic reaction networks.
\newblock {\em The Journal of Chemical Physics}, 130:134107, 2009.

\bibitem{ov:11}
D.~L. Otten and P.~Vedula.
\newblock A quadrature based method of moments for nonlinear {F}okker-{P}lanck
  equations.
\newblock {\em Journal of Statistical Mechanics: Theory and Experiment},
  2011(9), 2011.

\bibitem{paw:67}
R.~F. Pawula.
\newblock Generalizations and extensions of the {F}okker-{P}lanck {K}olmogorov
  equations.
\newblock {\em IEEE Transactions on Information Theory}, 13(1), 1967.

\bibitem{rh:89}
H.~Risken and H.~Haken.
\newblock {\em {The {F}okker-{P}lanck Equation: Methods of Solution and
  Applications Second Edition}}.
\newblock Springer, 1989.

\bibitem{sha:48}
C.~E. Shannon.
\newblock A mathematical theory of communication.
\newblock {\em The Bell System Technical Journal}, 27:379–--423, 1948.

\bibitem{sh:11}
A.~Singh and J.~P. Hespanha.
\newblock Approximate moment dynamics for chemically reacting systems.
\newblock {\em IEEE Trans. on Automat. Contr.}, 56(2):414--418, 2011.

\bibitem{wil:06}
D.J. Wilkinson.
\newblock {\em Stochastic modelling for systems biology}.
\newblock Chapman \& Hall/CRC mathematical and computational biology series.
  Boca Raton, FL : Taylor \& Francis, 2006.

\bibitem{wgmh:10}
V.~Wolf, R.~Goel, M.~Mateescu, and T.~A. Henzinger.
\newblock Solving the chemical master equation using sliding windows.
\newblock {\em BMC Systems Biology}, 4, 2010.

\end{thebibliography}
	
\end{document}